\documentclass[12pt, reqno]{amsart}

%%%%%%%%%%%%%%%%%%%%%  MY STUFF %%%%%%%%%%%%%%%%%%%%%%%%%%%%%%%%%%

\makeatletter
\g@addto@macro{\endabstract}{\@setabstract}
\makeatother

\usepackage{graphicx, stackrel}
\usepackage{amsmath, amssymb, amsthm, mathtools}
\usepackage{verbatim}
\usepackage{amsfonts}
\usepackage[authoryear,round]{natbib}
%font
%\usepackage{mathpazo}
\usepackage{fancyvrb}
\usepackage{color}
\usepackage[inline]{enumitem}
\setlist[enumerate,1]{label=(\arabic*),ref=(\arabic*)}

\usepackage[citecolor=blue, colorlinks=true, linkcolor=blue]{hyperref}
\usepackage{doi}
% caligraphic
\usepackage{mathrsfs}
\usepackage{bbm}

%% page layout
\usepackage[left=1.25in, right=1.25in, top=1.0in, bottom=1.15in, includehead, includefoot]{geometry}

%\usepackage[linesnumbered, lined]{algorithm2e}
%\usepackage[lined]{algorithm2e}

%put notes at the end
%\usepackage{endnotes}
%\let\footnote=\endnote

%\usepackage{paralist}
\usepackage{xspace}
\newcommand{\iid}{\textsc{iid}\xspace} % iid
\newcommand{\set}[1]{\left\{ #1 \right\}} % set
\newcommand{\abs}[1]{\left\lvert #1 \right\rvert} % absolute value
\newcommand{\norm}[1]{\left\lVert #1 \right\rVert} % norm
 % underbar
\newcommand{\re}{\hat{r}}
\newcommand{\ke}{\hat{\kappa}}

%\usepackage{multirow}

% nice inequalities
\renewcommand{\leq}{\leqslant}
\renewcommand{\geq}{\geqslant}

%extra spacing

%horizonal line

% skip a line between paragraphs, no indentation
\setlength{\parskip}{1.5ex plus0.5ex minus0.5ex}
\setlength{\parindent}{0pt}

\DeclareMathOperator*{\argmax}{argmax}
%\DeclareMathOperator{\overset{\circ}}{int}

% mics short cuts and symbols
%\newcommand{\st}{\ensuremath{\ \mathrm{s.t.}\ }}
\newcommand{\maximize}{\operatorname{maximize}}
\newcommand{\st}{\operatorname{subject~to}}

\newcommand{\iidsim}{\stackrel {\textrm{ {\sc iid }}} {\sim} }

% d for integrals
\newcommand*\diff{\mathop{}\!\mathrm{d}}
\renewcommand{\epsilon}{\varepsilon}

% Euler's e
\newcommand{\me}{\mathrm{e}}

% script letters
\newcommand{\sB}{\mathscr{B}}

% caligraphic letters
\newcommand{\cG}{\mathcal{G}}

\newcommand{\cV}{\mathcal{V}}

% primitive sets
\newcommand{\XX}{\mathsf X}
\renewcommand{\AA}{\mathsf A}

\newcommand{\DD}{\mathsf D}
\newcommand{\ZZ}{\mathsf Z}
\newcommand{\YY}{\mathsf Y}

\newcommand{\RR}{\mathbbm R}

\newcommand{\NN}{\mathbbm N}

\newcommand{\EE}{\mathbbm E} % don't use space, otherwise conditional expectation is weird

\theoremstyle{plain}
\newtheorem{theorem}{Theorem}[section]

\newtheorem{lemma}[theorem]{Lemma}

\theoremstyle{definition}

\newtheorem{example}{Example}[section]
\newtheorem{remark}{Remark}[section]
\newtheorem{assumption}{Assumption}[section]

%%%%%%%%%%%%%%%%%%%%%%%%%%%%%%%%%%%%%%%%%%%%%%%%
%%%%%% Supplementary preamble %%%%%%
%%%%%%%%%%%%%%%%%%%%%%%%%%%%%%%%%%%%%%%%%%%%%%%%
%\newtheorem*{continuancex}{Example \continuanceref { (Continued)}}
%\newenvironment{continuance}[1]
%  {\newcommand\continuanceref{\ref{#1}}\continuancex}
%  {\endcontinuancex}
%\DeclareMathOperator{\diag}{diag}
%\renewcommand{\underline}[1]{\text{\b{$#1$}}} % underbar similar to \bar{}
%  
%\usepackage{caption}
%
%% table note package
%%\usepackage{booktabs,caption,fixltx2e}
%\usepackage[flushleft]{threeparttable}
%
%% e for exponential
%\newcommand{\me}{\mathrm{e}}
%
%% create \mathds{Z}_+ later on
%\usepackage{ dsfont }
%
%% type special characters
%%\usepackage{fontspec}
%
%% create ||| \cdot |||
%\newcommand{\vertiii}[1]{{\left\vert\kern-0.25ex\left\vert\kern-0.25ex\left\vert #1 
%    \right\vert\kern-0.25ex\right\vert\kern-0.25ex\right\vert}}
%    
%% stack figures vertically
%\usepackage{subcaption}
%\usepackage{graphicx}  
%
%% Roman numbers
%\makeatletter
%\newcommand*{\rom}[1]{\expandafter\@slowromancap\romannumeral #1@}
%\makeatother
%
%% bar below
%\usepackage{stackengine}
%\newcommand\barbelow[1]{\stackunder[1.2pt]{$#1$}{\rule{.8ex}{.075ex}}}
%%%%%%%%%%%%%%%%%%%%%%%%%%%%%%%%%%%%%%%%%%%%%%%%

\begin{document}

\title{}

\date{\today}

\begin{center}
	\Large 
    Unbounded Dynamic Programming via the Q-Transform\footnote{We
        thank Takashi Kamihigashi and Yiannis Vailakis for valuable feedback
        and suggestions, as well as audience members at the Econometric
        Society meeting in Auckland in 2018 and the 2nd Conference on
        Structural Dynamic Models in Copenhagen.  Financial supports from ARC
        Discovery Grant
		DP120100321 and NSFC No.72003138 are gratefully acknowledged. \\ 
		\emph{Email addresses:} \texttt{qingyin.ma@cueb.edu.cn}, 
    \texttt{john.stachurski@anu.edu.au}, 
    \texttt{atoda@ucsd.edu} }

	\bigskip
	\normalsize
	Qingyin Ma\textsuperscript{a}, John Stachurski\textsuperscript{b}, 
    Alexis Akira Toda\textsuperscript{c} \par \bigskip
	
	\textsuperscript{a}ISEM, Capital University of Economics and Business  \\
	\textsuperscript{b}Research School of Economics, Australian National University \\
	\textsuperscript{c}Department of Economics, University of California San Diego \bigskip
	
	\today
\end{center}

\begin{abstract}
%    We propose a new approach to solving dynamic decision problems with
%    unbounded rewards, based on an application of the Q-learning
%    transform to the Bellman equation. Q-learning is a technique from the
%    reinforcement learning literature with strong convergence properties. In our case, the objective of the
%    transform is to convert an unbounded dynamic program into a bounded one.
%    The approach is general enough to handle problems for which existing methods
%    struggle, and yet is accessible to applied researchers and Ph.D.\ students.
%	We show by example that many common decision problems satisfy our conditions.
    We propose a new approach to solving dynamic decision problems with
    unbounded rewards based on the transformations used in Q-learning.
    In our case, the 
    objective of the transform is to convert an unbounded dynamic program into 
    a bounded one. The approach is general enough to handle problems for which 
    existing methods struggle, and yet simple relative to other techniques and accessible for applied work.
    We show by example that many common decision problems satisfy our conditions.

	\medskip
	\noindent
	\textit{JEL Classifications:} C61, C65%, E00 
\\
	\textit{Keywords:} Dynamic programming, Optimality, Q-learning
\end{abstract}

%\maketitle

%section
\section{Introduction}

Dynamic programming forms the backbone of modern economics. %
  Every year, thousands of students in graduate programs
around the world learn the standard methodology for infinite horizon problems with
discounting. Constructed primarily by \cite{blackwell1962discrete,
blackwell1965discounted}, this theory uses contraction mappings over spaces of
bounded functions metrized by the supremum norm. The approach is elegant,
powerful in terms of deriving theoretical results and, when applicable,
generates globally convergent algorithms. Standard textbook
treatments can be found in \cite{stokey1989} and \cite{bertsekas2017dynamic}.

Unfortunately, the approach is not actually applicable in the vast majority of concrete
economic problems.  This is due to the fact that almost all reward functions used in
applications are unbounded. For example, in quantitative work, the most
commonly used flow utility function is the
constant relative risk aversion (CRRA) specification
\begin{equation}\label{eq:CRRA}
    u(x)=
    \begin{cases*}
        \frac{x^{1-\gamma}}{1-\gamma} & if $\gamma>0$ and $\gamma\neq 1$,\\
        \log x & if $\gamma=1$,
    \end{cases*}
\end{equation}
where $\gamma>0$ is the risk aversion coefficient.  The function $u$ is
unbounded above if $0<\gamma<1$, is unbounded below if $\gamma>1$, and is
unbounded both from above and below if $\gamma=1$. Unbounded reward functions
violate Blackwell's conditions.

%In practice, many reward functions are unbounded, violating Blackwell's conditions. 

The need to deal with unbounded reward functions has led researchers to build
various extensions of Blackwell's theory. These extensions typically involve
either (a) recovering contractivity by modifying the metric that
measures distance between candidate value functions, or (b) introducing a
weaker form of contractivity that preserves at least some of Blackwell's
optimality results.  The former approach is exemplified by the weighted
supremum norm method, introduced by \cite{wessels1977markov} and applied
to economic problems by \cite{boyd1990recursive}, \cite{alvarez1998dynamic}, 
\cite{bauerle2018stochastic} and several other authors.\footnote{Modern
    summaries of the method can be found in
\cite{Hernandez-LermaLasserre1999}, \cite{bauerle2011markov} and
\cite{bertsekas2013abstract}.} The second approach can be seen in the work of
\cite{rincon2003existence} and \cite{martins2010existence} for the
deterministic case and \cite{matkowski2011discounted} for the stochastic case,
who apply local contractions on successively larger subsets of the state
space.

While these techniques are ingenious, and certainly important from a
theoretical perspective, their direct impact on quantitative applications
in economics has, as yet, been limited.  Weighted supremum norms work well with certain
problems but struggle with others, such as when rewards are unbounded 
below.\footnote{\label{fn:ubdd_below}To be specific, the weighted supremum 
	norm approach can be applied broadly when the reward function 
	is unbounded above and bounded below. However, when the reward 
	function is unbounded below, it is generally hard to construct 
	a proper weighting function and thus a contraction mapping via 
	the weighted supremum norm. Therefore, it is challenging to characterize
	the value function as the unique fixed point of the Bellman operator.
	For more background, see, e.g., \cite{le2005recursive} and 
	\cite{jaskiewicz2011discounted}.}
%an appropriate candidate space on which the Bellman operator is a self map
Local contraction methods are broadly applicable under reasonable assumptions
but require verifying technical conditions involving increasing
sequences of compact sets that exhaust the state space.\footnote{It is required that
the decision problem satisfies contractivity on each of these compact sets,
and some control over the way that contractivity fades must also be imposed
(see, e.g., \cite{matkowski2011discounted}, Assumptions~A1--A5, C1-C2 and
D1-D2).} Proofs of convergence properties are significantly more complex than
the bounded case, and the statements of the theorems are more challenging to
interpret.

In this paper we take an alternative route.  Rather than transforming the
standard contraction mapping theory of Blackwell to handle unbounded dynamic
programs, we transform the unbounded dynamic programs into bounded ones so
that standard contraction mapping theory can be applied.  The transformation
that we use maps value functions into the ``action-value'' functions 
used in Q-learning, a popular reinforcement learning algorithm that
allows online updating by a controller in an incremental fashion (see, e.g., 
\cite{watkins1992q} or \cite{szepesvari2010algorithms}).
It has been shown that the algorithm has strong global convergence 
properties and our results are in this spirit.

The core idea is as follows: In standard dynamic
programs, the Bellman operator is defined by composing the following two
operations: given a candidate value function $v$,
\begin{enumerate*}[label=(\alph*)]
	\item\label{item:ope1}
	compute the discounted expectation $g\coloneqq \beta\EE v$ over current
	states and actions, and 
	\item\label{item:ope2}
	add current reward $r$, maximize over current feasible actions, and then
	update the candidate value function as $v=\max\set{r+g}$.
\end{enumerate*}
%
%%
%\begin{enumerate*}[label=(\alph*)]
%	\item\label{item:ope1}
%	compute the discounted expectation $g\coloneqq \beta\EE v$ over current
%    states and actions,
%	\item\label{item:ope2}
%	add current reward $r$ and obtain $r+g$, and
%%	which corresponds to the action-value function from Q-learning, and
%	\item\label{item:ope3}
%	maximize over current feasible actions and update as $v=\max\set{r+g}$.
%\end{enumerate*}
%%
Instead of aiming at updating $v$, 
one can transform the Bellman operator by applying operations
$\ref{item:ope2} \to \ref{item:ope1}$, which is
the Q-transform, and focus on updating the candidate 
``action-value'' function $g$.
 We show that, under relatively weak and 
easily testable conditions, the transformed Bellman operator is a 
contraction with unique fixed point $g^*$ that is bounded, and the 
true value function $v^*$ can be recovered as $v^*=\max\set{r+g^*}$.

%Q-learning is a popular reinforcement learning algorithm that
%allows online updating by a controller in an incremental fashion
%\citep{watkins1992q}.  It has been
%shown that the algorithm has strong global convergence properties and our
%results are in this spirit.\footnote{The value iteration algorithm we consider
%is different from the standard Q-learning algorithm, which focuses on online
%learning.  A recent exposition and summary of the
%    theoretical literature on Q-learning and other reinforcement learning
%    algorithms can be found in \cite{szepesvari2010algorithms}.}

One advantage of the transformation-based approach to 
unbounded programs adopted in this paper is that the methodology fits 
well with the case where the weighted supremum norm approach struggles: 
maximization problems where rewards are unbounded below (see 
Footnote~\ref{fn:ubdd_below}). Such optimization problems are commonplace 
in quantitative applications, such as those involving CRRA flow utility 
with $\gamma > 1$, which is the empirically relevant case.
%\footnote{Although \cite{alvarez1998dynamic} find some success
%    handling problems that are unbounded below using weighted supremum norm
%    methods, they treat only deterministic problems and require a
%    form of homogeneity that fails to hold in many applications.}
We show that, for many canonical applications from this class of problems, the 
Q-transform converts unbounded value functions into bounded action-value functions.  
Standard contraction mapping theory can then be applied.

A second advantage of the Q-transform method is that it has no difficulty handling
stochastic dynamic programs. In fact, the action-value functions associated
with the Q-transform tend to be well-behaved and regular in the stochastic case,
due to the fact that conditional expectations operators have a smoothing
effect on functions. Therefore, it requires less restriction on the primitive setup
compared with many existing methods. In contrast, many existing papers on unbounded 
dynamic programming either focus on the deterministic case or restrict the support 
of shocks.\footnote{\cite{alvarez1998dynamic} handle certain homogeneous problems
	using weighted supremum norm methods, although they focus on the deterministic case.
	A generalization to the stochastic case requires bounds on the maximum growth rate.
	Assumptions~D1-D2 of \cite{matkowski2011discounted} also require the state not to
	jump too much. These assumptions are strong from an applied perspective.}

A third advantage is that, despite its generality, the Q-transform
approach is accessible to a general audience. In particular, 
it relies mainly on the standard contraction mapping theorem, and can be 
conveniently generalized to handle dynamic programs where rewards are both 
unbounded above and unbounded below. We provide many canonical examples 
such as optimal savings, optimal default, job search, and optimal portfolio,
all with unbounded rewards and in stochastic environments.  

%Thus we believe
%that our theory is useful for applied researchers as well as for training the
%next generation of economics Ph.D.\ students.

%The idea is as follows. In standard dynamic programs, the
%Bellman operator is defined by composing the following three operations: given a candidate value function $v$,
%%
%\begin{enumerate*}
    %\item\label{item:ope1} compute the discounted conditional expectation of
        %continuation value and obtain $g\coloneqq \beta\EE v$,
    %\item\label{item:ope2} add current reward $r$ and obtain $r+g$, and
    %\item\label{item:ope3} maximize over current feasible actions and update
        %as $v=\max\set{r+g}$.
%\end{enumerate*}
%%
%Alternatively, given the ``interim'' value function $g$, one could define the
%transformed Bellman operator by applying operations $\ref{item:ope2} \to
%\ref{item:ope3} \to \ref{item:ope1}$ to update $g$.  Because the expectation
%operator tends to smooth things out, the interim value function $g$ tends to
%be better behaved than the true value function $v$. We show that under
%relatively weak conditions that are easily testable, the transformed Bellman
%operator is a contraction, has a unique fixed point $g^*$, and the true value
%function $v^*$ can be recovered as $v^*=\max\set{r+g^*}$.

On a technical level, the contribution of our paper is twofold. First, we
identify general sufficient conditions under which unbounded dynamic programs
can be transformed into bounded ones.  Second, we prove that, when such a
transformation is available, the solution to the transformed problem is equal
to that to the original problem. To the best of our knowledge, this is the
first research in which the Q-transform has been used to convert
unbounded reward dynamic programs into bounded ones.\footnote{Researchers in
    economics have used alternative transformations of the Bellman equation
    when studying dynamic programming problems,
including \cite{rust1987optimal}, \cite{jovanovic1982selection},  
\cite{abbring2018very} and \cite{ma2021dynamic}.
These transformations are typically aimed at improving economic intuition,
estimation properties or computational efficiency.}

There are connections between our work and the study of unbounded dynamic
programming in \cite{kamihigashi2014elementary}. Assuming the Bellman operator
maps an order interval of functions into itself and some transversality-like
conditions hold, \cite{kamihigashi2014elementary} shows the existence and
uniqueness of the fixed point of the Bellman equation and obtains optimality
properties. The relative advantages of
the approach presented here include treating stochastic decision
problems (\cite{kamihigashi2014elementary} restricts attention to the
deterministic case) and obtaining uniform geometric rates of convergence in
value function iteration, rather than pointwise convergence. 

Our work is also related to results in \cite{van1980stochastic} and
\cite{jaskiewicz2011discounted}, which explicitly admit problems with rewards
that are unbounded below.  In this setting, \cite{jaskiewicz2011discounted}
show that the value function of a Markov decision process is a solution to the
Bellman equation.  The methodology developed here strengthens their results by
adding uniqueness and proving that value function iteration leads to an
optimal policy. In an extension section, we combine our
methodology with the weighted supremum norm approach, allowing us to handle
problems that are both unbounded above and unbounded below.

Some studies have approached dynamic programming with unbounded
rewards via an Euler equation method, as seen for example in
\cite{kuhn2013recursive} and \cite{MaStachurskiToda2020JET}.  This methodology can
be powerful but is limited in scope.  For example, in Section~\ref{s:app}, we
show how the Q-transform method can be applied to the kinds of
optimal savings problem with endogenous labor choice that are common in
both theoretic and applied works (see, e.g.,
\cite{CastanedaDiazGimenezRiosRull2003} and \cite{zhu2020existence}).  
The Euler equation method of
\cite{kuhn2013recursive} and \cite{MaStachurskiToda2020JET} is not applicable in
this setting because the choice variable (consumption and labor) is
multi-dimensional. Similarly, the Euler equation method is not
applicable in the optimal savings problem in Section~\ref{ss:portfolio}, due to
nontrivial portfolio choice. Optimal consumption-portfolio problems have been
mostly studied in the literature under special homogeneity assumptions
\citep{samuelson1969,Toda2014JET} or finite horizon \citep{HePearson1991}. Our
framework shows that the problem can be studied in an infinite-horizon
environment when the utility function is unbounded below.

The Q-transform is not limited to dynamic programs that are
additively separable. In a recent paper,
\cite{bauerle2018stochastic} study an optimal growth model in the presence of
risk-sensitive preference, in which the agent is risk averse in future utility
(in addition to being risk averse in future consumption).\footnote{This is in
    comparison with the classical additively separable preference model, where
    the agent is risk neutral in future utility.  The additively separable
    preference is a special limiting case by letting $\gamma\to 0$ and using
    the property $\lim_{\gamma\to 0}-\frac{1}{\gamma}\log \EE \me^{-\gamma
    X}=\EE X$ for a random variable $X$.  Further comments on risk-sensitive
preference can be found in \cite{follmer2004stochastic},
\cite{bauerle2011markov}, \cite{bauerle2018stochastic} and references cited
therein.  Models with risk-sensitive preference are also related to robust
control problems, as discussed in \cite{hansen2008robustness}.} They provide
valuable optimality results, although these results cannot treat many common
period utility functions, such as CRRA with relative risk aversion at least  
one or logarithm utility,
because they exclude all utility functions that are unbounded below.
Furthermore, an optimal growth model is a rather special dynamic program.
In an extension section, we present a general theory of dynamic programming with
risk-sensitive preferences via the Q-transform.

The rest of our paper is structured as follows.  Section~\ref{s:ea} starts the
exposition with typical examples. Section~\ref{s:gf} presents the general
theory when rewards are bounded above (though potentially unbounded below).
Section~\ref{s:app} provides additional applications.  Section~\ref{s:ext}
extends the general theory to the case when rewards are unbounded both from
above and below, and also considers the case with recursive (risk-sensitive) 
preferences. Section~\ref{s:conc} concludes. Main proofs are deferred to the 
appendix.

\section{Example Applications}\label{s:ea}

We first illustrate the methodology for converting unbounded
problems to bounded ones in some relatively simple settings.
More sophisticated applications are deferred to Section~\ref{s:app} 
after presentation of the theory.

\subsection{Application 1: Optimal Savings}\label{ss:os}

Consider an optimal savings problem where a borrowing constrained agent solves
\begin{subequations}\label{eq:os}
\begin{align}
&\maximize && \EE \sum_{t = 0}^\infty \beta^t u(c_t)\label{eq:os_obj}\\
&\st &&0 \leq c_t \leq w_t,\label{eq:os_borrow} \\
&&& w_{t+1} = R (w_t - c_t) + y_{t+1},\label{eq:os_budget}
\end{align}
\end{subequations}
with $(w_0,y_0)$ given. Here $\beta \in (0,1)$ is the discount factor,
$c_t, w_t, y_t\geq 0$ are, respectively, consumption, wealth and non-financial
income at time $t$, $R\geq 0$ is the gross rate of return on financial
income, and $u \colon \RR_+\to \RR\cup\set{-\infty}$ is a utility function, 
which is increasing and continuous.\footnote{\label{fn:timing}The optimal
	savings problem represents one of the fundamental workhorses of modern
	macroeconomics. By convention, the financial wealth $w_t$ in
	the budget constraint in \eqref{eq:os_budget} includes the current
	non-financial income $y_t$. One can modify the budget constraint to an
	alternative timing such as $w_{t+1} = R (w_t - c_t + y_t)$, where the 
	time $t$ financial wealth $w_t$ excludes current income $y_t$, and the 
	arguments below still go through after suitable modifications. An 
	application along these lines is given in Section \ref{ss:os_labor}.} 
For now, suppose that $u$ is bounded above but 
unbounded below, with $u(0)=-\infty$. This is the case for, say, the constant
relative risk aversion (CRRA) specification \eqref{eq:CRRA} with 
$\gamma>1$ (as in much of the literature).

Assume that $\set{y_t}$ satisfies $y_t=y(z_t,\xi_t)$, where $z_t$ is a Markov
process with state space $\ZZ$, $\xi_t$ is an \iid shock of arbitrary
dimension, and $y$ is a nonnegative measurable function. The Bellman equation
of this problem is
\begin{equation}\label{eq:be}
    v(w, z) = \sup_{0 \leq c \leq w}\set{u(c) + \beta \EE_z v(R (w - c) + y', z')},
\end{equation}
where $y'=y(z',\xi')$.  The value function
is unbounded below, and the classical arguments in
\cite{blackwell1965discounted} cannot be applied.\footnote{To confirm this,
suppose to the contrary that $v$ is the value function and $\abs{v} \leq M <
\infty$. Then $-M \leq v(0, z) \leq u(0) + \beta M = -\infty$. Contradiction.} 

Consider, however, the following line of argument.  Suppose that 
\begin{equation}\label{eq:Ezuy}
    \inf_z \EE_z u(y(z',\xi'))
    %\coloneqq \inf_z \EE[u(y(z_{t+1},\xi_{t+1})) \mid z_t=z]
    >-\infty,
\end{equation}
which is a relatively mild restriction.\footnote{The expectation in \eqref{eq:Ezuy} 
	should be understood as $\EE[u(y(z_{t+1},\xi_{t+1})) \mid z_t=z]$. 
	Condition~\eqref{eq:Ezuy} holds if, say, $y(z,\xi) = z$ for all $\xi$ and $\ZZ$
    is finite and positive \citep{aiyagari1994,Cao2020}, %Acikgoz2018
    or if income has a persistent-transitory representation \citep{heathcote2010macroeconomic,EjrnaesBrowning2014} such as 
    $\log y(z,\xi)=\mu(z)+\sigma(z)\xi$, with suitable distributional assumptions 
    (e.g., $u$ is CRRA, $z$ is a finite state Markov chain, and $\xi$ has a finite
    moment generating function).} 
Let
\begin{equation}\label{eq:gfv}
    g(w,z,c) \coloneqq \beta \EE_z v(R(w-c) + y', z').
\end{equation}
The function $g$ is called the \emph{action-value function}, since it returns
the value of the state after committing to a given action in the current
period (and using continuation values dictated by $v$ thereafter). Following the
spirit of Q-learning, we begin
by rewriting the Bellman equation in terms of the action-value function 
alone.\footnote{Our approach is slightly different from the Q-learning approach, 
	where the action-value function is defined as $u + g$ instead. We will show that
	eliminating the current reward (which can be unbounded below) can help us transform
	the action-value function into a bounded function and, as a result, the 
	classical dynamic programming theory applies. This is the essence of our
	approach.} 

%Similar to the 
%Q-learning approach, we begin by rewriting the Bellman equation in terms of 
%the action-value function.\footnote{However, our approach slightly 
%	differs, in that we do not include current rewards in the action-value
%	function. We will formally show that eliminating the current reward (which can be 
%	unbounded below) can help us transform the action-value
%	function into a bounded function and, as a result, the classical 
%	dynamic programming theory applies. This is the essence of our approach.XXXXXXXX } 

%\footnote{Our approach is slightly different from the Q-learning approach, which
%	defines the action-value function as $u + g$ instead. We
%	discard the current utility $u$ in order to avoid working with unbounded below 
%	action-value functions, which is an essential step for our method to succeed. 
%	Formal discussion are given in Section~\ref{ss:q-trans} below.} 

In particular, we combine \eqref{eq:be} and \eqref{eq:gfv} to give
\begin{equation}\label{eq:nv}
v(w, z) = \sup_{0 \leq c \leq w}\set{u(c) + g(w,z,c)}.
\end{equation}
We eliminate the function $v$ from \eqref{eq:nv} by using the definition
of $g$ in \eqref{eq:gfv}.  The first step is to evaluate $v$ in \eqref{eq:nv}
at $(R(w-c) + y', z')$, which gives 
\begin{equation*}
v(R(w-c) + y', z') = \sup_{0 \leq c' \leq R(w-c) + y'}\set{u(c') + g(R(w-c) + y',z',c')}.
\end{equation*}
Taking the conditional expectation of both sides with respect to $z$,
multiplying by $\beta$ and using \eqref{eq:gfv} again, we get
\begin{equation}\label{eq:ng}
g(w,z,c) = \beta \EE_z \sup_{0 \leq c' \leq R(w-c) + y'}\set{u(c') + g(R(w-c) + y',z',c')},
\end{equation}
which is a functional equation in $g$. Consider a transformed Bellman operator
$S$ such that $Sg(w,z,c)$ is equal to the right hand side of \eqref{eq:ng}.
By construction, any solution $g$ of \eqref{eq:ng} is a fixed point of $S$ and
vice versa. Let $\cG$ be the space of bounded measurable functions on the set
$\DD$ defined by
\begin{equation*}
    \DD\coloneqq \set{(w,z,c)\in \RR_+ \times \ZZ \times \RR_+:c\leq w}
\end{equation*}
equipped with the supremum norm $\norm{\cdot}$. The set $\cG$ can be
understood as the family of candidate action-value functions for this problem. We claim that $S$ maps $\cG$
into itself and, moreover, is a contraction of modulus $\beta$ with respect to
the supremum norm.

To see that this is so, pick any $g \in \cG$. Then $Sg$ is bounded above,
since
\begin{equation*}
    Sg(w,z,c) \leq \beta (\sup u + \norm{g}) < \infty.
\end{equation*}
More importantly, $Sg$ is bounded below. Indeed, using $g(w',z',c')\geq
-\norm{g}$ and the monotonicity of $u$, we obtain
\begin{align*}
	Sg(w,z,c) 
	& \geq 
	\beta \EE_z
	\sup_{0 \leq c' \leq R(w-c) + y'}
	\set{u(c') - \norm{g}}
	\\
	& =
	\beta \EE_z
	\set{u(R(w-c) + y') - \norm{g}}\\
	& \geq 
	\beta \EE_z u(y') - \beta \norm{g}.
\end{align*}
The last term is finite by \eqref{eq:Ezuy}. Hence $S$ is a self map on $\cG$. To show
that $S$ is a contraction mapping, we verify \cite{blackwell1965discounted}'s
sufficient conditions. From \eqref{eq:ng} we see that $g_1\leq g_2$
implies $Sg_1\leq Sg_2$, so monotonicity holds. If $M\geq 0$ is any constant,
then for any $g\in \cG$ we have
\begin{align*}
    S(g+M)(w,z,c)
    &=\beta \EE_z \sup_{0 \leq c' \leq R(w-c) + y'}\set{u(c') + g(R(w-c) + y',z',c')+M}\\
    &=\beta \EE_z \sup_{0 \leq c' \leq R(w-c) + y'}\set{u(c') + g(R(w-c) + y',z',c')}+\beta M\\
    &=Sg(w,z,c)+\beta M,
\end{align*}
so the discounting property holds.  We have now shown that $S$ is a
contractive self-map on $\cG$. Moreover, $\cG$ is a space of bounded
functions. By Banach's contraction mapping theorem, $S$ has a unique fixed
point $g^*$ in $\cG$. 

%If a
%version of Bellman's principle of optimality applies to the transformed Bellman
%equation \eqref{eq:ng}, we also know that policies obtained in this way exactly coincide
%with optimal policies, so, if all of these conjectures are correct, we have a
%complete characterization of optimality.

It is natural to guess that we can now insert $g^*$ into the right hand side
of \eqref{eq:nv}, %the ``Bellman equation'' \eqref{eq:ng}, compute the
maximize at each state, and obtain the optimal consumption policy.  We show
that this conjecture is correct and, more generally, that Bellman's principle
of optimality vis-\`a-vis the transformed Bellman equation also holds.  The
arguments are not trivial, since the transformation in \eqref{eq:gfv} that
maps $v$ to $g$ is not bijective.  Full details are provided in
Section~\ref{s:gf}.\footnote{For the savings problem treated above, one can
    also use Euler equation methods, which circumvent some of the issues
    associated with unbounded rewards (see, e.g., \cite{kuhn2013recursive} and
    \cite{MaStachurskiToda2020JET}).  However, for many practical
    applications, these Euler equation arguments cannot be used, due to
    features such as recursive preferences or discrete or multi-dimensional
    choices (see below).  Moreover, our detailed treatment of optimal savings
    in Section~\ref{ss:os_cont} shows that, even when Euler equation methods
    are available, the assumptions needed for the theory in this paper are
significantly weaker, at least in some dimensions.}

\subsection{Application 2: Optimal Default}

\label{ss:are}

Consider an infinite horizon optimal savings problem with default, in the
spirit of \cite{arellano2008default} and a large related
literature.\footnote{See, e.g., \cite{hatchondo2009heterogeneous}, 
	%\cite{AguiarAmador2019} 
	\cite{hatchondo2016debt} and \cite{AAHW2019}.} 
A country with current assets $w_t$ chooses between continuing to 
participate in international financial markets and defaulting. As in 
Section~\ref{ss:os}, output $y_t = y(z_t, \xi_t)$ is a function of 
a Markov process $\set{z_t}$ and an \iid shock $\set{\xi_t}$. 
To simplify the exposition, we assume that default leads to permanent
exclusion from financial markets, with lifetime value
\begin{equation*}
	v^d(y, z) = \EE \sum_{t = 0}^\infty \beta^t u(y_t).
\end{equation*}
The utility function $u$ has the same properties as Section~\ref{ss:os}.
The value of continued participation in financial markets is 
\begin{equation*}
	v^c(w, y, z)
	= \sup_{-b \leq w' \leq R(w + y)}
	\set{
	u(w + y - w'/R) + \beta \EE_{z} \, v(w', y', z')
	},
\end{equation*}
where $b > 0$ is a constant borrowing
constraint and $v$ is the value function satisfying
\begin{equation*}
	v(w, y, z)
	= \max
	\set{
	v^d(y, z)
	,\, 
	v^c(w, y, z)
	}.
\end{equation*}
The function $v$ is unbounded below because $u(0)=-\infty$.
However, we can convert this into a bounded problem, as the
following analysis shows. 

Let $i$ be a discrete choice variable taking values in $\set{0,1}$, with $0$
indicating default and $1$ indicating continued participation. We introduce
the action-value function
\begin{equation*}
g(z,w',i) \coloneqq 
	\begin{cases*}
	\beta \EE_{z} v^d(y', z') & if $i=0$,  \\
	\beta \EE_{z} v(w', y', z') & if $i=1$,
	\end{cases*}
\end{equation*}
so that for $-b \leq w' \leq R(w + y)$, we have
\begin{equation*}
	v(w, y, z)
	= \max
	\set{u(y) + g(z,w',0),\sup_{w'}\set{
	u(w + y - w'/R) + g(z, w',1)}}.
\end{equation*}
Eliminating the value function $v$ yields
\begin{equation*}
    g(z,w',0) = \beta \EE_{z} \set{u(y') + g (z',w',0)}
    \quad \text{and}
\end{equation*}
\begin{equation*}
	g(z, w',1) 
	= \beta \EE_{z}
	\max
	\set{
	u(y') + g(z',w',0) 
	, \,
	\sup_{w''}
	\set{
	u(w' + y' - w''/R) + g(z',w'',1)
	}
	}, 
\end{equation*}
%%
%\begin{equation*}
%	g(z, w')
%	= \beta \EE_{y',z' \mid z}
%	\max
%	\set{
%	v^d(y', z') 
%	, \,
%	\sup_{w''}
%	\set{
%	u(w' + y' - w''/R) + g(z',w'')
%	}
%	}, 
%\end{equation*}
%
where $-b \leq w'' \leq R(w' + y')$. We can then define the fixed point
operator $S$ corresponding to these functional equations.

%%
%\begin{equation*}
%	Sg(z, w')
%	= \beta \EE_{y',z' \mid z}
%	\max
%	\set{
%	v^d(y', z')
%	, \,
%	\sup_{w''}
%	\set{
%	u(w' + y' - w''/R) + g(z',w'')
%	}
%	}.
%\end{equation*}
%%
If $g$ is bounded above by some constant $K$, then $Sg \leq \sup_c u(c) +
K$.  More importantly, if $g$ is bounded below by some constant
$M$, we obtain
\begin{align*}
    Sg(z,w',0) & \geq \beta \EE_{z} u(y') + \beta M,\\
	Sg(z, w',1)
	& \geq
	\beta \EE_{z}
	\max
	\set{
	u(y') + M 
	,\,
	u(w' + y' + b/R) + M
	}
	\\
	& =
	\beta \EE_{z}
	\max
	\set{
	u(y') 
	,\,
	u(w' + y' + b/R)
	}
	+ \beta M.
%	\\
%	& \geq
%	\beta \EE_{y'\mid z}
%	\max
%	\set{
%	u(y')    
%	,\,
%	u(-b + y' + b/R) + M
%	}.
\end{align*}
Hence, $Sg$ is bounded below by a finite constant if \eqref{eq:Ezuy} holds. An
argument similar to the one in Section~\ref{ss:os} now proves that $S$ is a
contraction with respect to the supremum norm. %(Section~\ref{s:gf}
(Section~\ref{ss:od_cont} gives details.)

\subsection{Application 3: Job Search}\label{ss:jsp}

Following \cite{mccall1970economics}, consider a search problem where an
unemployed worker can either accept the current job offer and work at that
wage forever or choose an outside option (e.g., work in the informal sector)
and continue to the next period. Letting $z_t$ be the worker's productivity at
time $t$, which is a Markov process, the job offer $w_t$ and outside option
$c_t$ satisfy
\begin{equation}\label{eq:zp}
    w_t=w(z_t,\xi_t)\quad \text{and}\quad c_t=c(z_t,\xi_t),
\end{equation}
where $w,c$ are nonnegative measurable functions and $\xi_t$ is an \iid shock
that could be vector-valued.\footnote{For instance, if the job offer $w_t$ and
outside option $c_t$ are independent conditional on $z_t$, then we may write
$\xi=(\xi_1,\xi_2)$, where $\xi_1$ and $\xi_2$ are independent, $w(z,\xi)$
depends only on $z$ and $\xi_1$, and $c(z,\xi)$ depends only on $z$ and
$\xi_2$.} Letting $u$ be the utility function and $\beta\in (0,1)$ be the
discount factor, a worker that accepts a job offer $w$ enjoys lifetime utility
$\sum_{t=0}^\infty \beta^t u(w)=\frac{u(w)}{1-\beta}$. Therefore, the worker's
value function satisfies the Bellman equation
\begin{equation}
	\label{eq:mcvf}
	v(w, c, z) = \max 
	\set{
	\frac{u(w)}{1-\beta},
	u(c) + \beta \EE_{z}v(w', c', z') 
	}.
\end{equation}
%
%Let $u$ be increasing and continuous.
For now, let $u$ be bounded above. In addition, analogous to \eqref{eq:Ezuy}, assume
\begin{equation}\label{eq:Ezuy2}
    \text{either}~\inf_z\EE_zu(w')>-\infty~\text{or}~\inf_z\EE_zu(c')>-\infty.
\end{equation}
The value function $v(w,c,z)$ is unbounded below, if, say
$u$ is CRRA as in \eqref{eq:CRRA} with $\gamma>1$ and the job offer and
outside option in \eqref{eq:zp} can be arbitrarily small. To shift to a
bounded problem, we can proceed in a similar vein to our manipulation of the
Bellman equation in the optimal savings case. First we set
\begin{equation*}
	g(z) \coloneqq \beta \EE_{z} \, v(w', c', z'),
\end{equation*}
so that \eqref{eq:mcvf} can be written as 
\begin{equation*}
	v(w, c, z) = \max 
	\set{
	\frac{u(w)}{1-\beta},\;
	u(c) + g(z)
	}.
\end{equation*}
Next we use the definition of $g$ to eliminate $v$ from this last expression,
which leads to the functional equation
\begin{equation}
	\label{eq:hfe}
	g(z) = \beta \, \EE_{z} 
	\max 
	\set{
	\frac{u(w')}{1-\beta},\;
	u(c') + g(z')
	}.
\end{equation}
Let $S$ be an operator such that $Sg(z)$ is equal to the right hand side of
\eqref{eq:hfe}.  It is clear that if $g$ is bounded above then so is $Sg$.  In
addition, if $g$ is bounded below then so is $Sg$.  To show this, using the
elementary bound
\begin{equation}
\EE \max\set{X, Y} \geq \max\set{\EE X, \EE Y}\label{eq:EmaxXY}
\end{equation}
for arbitrary random variables $X, Y$ and $g\geq -\norm{g}$, we have
\begin{equation*}
Sg(z) \geq \beta \max \set{\EE_{z} \frac{u(w')}{1-\beta}, \EE_{z} u(c')-\norm{g}}.
\end{equation*}
Condition~\eqref{eq:Ezuy2} then implies that $Sg$ is also bounded below.

An argument similar to the one adopted in Sections~\ref{ss:os}--\ref{ss:are} 
shows that $S$ is a contraction mapping with respect to the
supremum norm on a space of bounded functions.  Thus, we can proceed down
the same path to establish optimality.

\section{General Formulation and Theory}
\label{s:gf}

Section~\ref{s:ea} showed how some unbounded problems can be converted to
bounded problems by transforming the Bellman equation. The next step is to
confirm the validity of such a transformation in terms of the connection
between the transformed Bellman equation and optimal policies. We do this in a
generic dynamic programming setting that contains the applications given
above. 

\subsection{Problem Formulation}
\label{ss:t}

For a given topological space $E$, let $\sB(E)$ be the Borel subsets of $E$.  For our
purpose, a dynamic program consists of
\begin{itemize}
	\item a nonempty set $\XX$ called the \emph{state space},
	\item a nonempty set $\AA$ called the \emph{action space},
    \item a nonempty correspondence $\Gamma:\XX \twoheadrightarrow \AA$ called
        the \emph{feasible correspondence}, along with the associated set of
        \emph{state action pairs}
	\begin{equation*}
	    \DD \coloneqq \set{ (x,a) \in \XX \times \AA : a \in \Gamma (x) },
	\end{equation*}
	\item a measurable map $r: \DD \to \RR \cup \set{-\infty}$ called the \emph{reward function},
    \item a constant $\beta \in (0,1)$ called the \emph{discount factor}, and 
    \item a \emph{stochastic kernel} $P$ governing the evolution of
        states.\footnote{Here a \emph{stochastic kernel} corresponding to our
            controlled Markov process $\set{(x_t,a_t)}$ is a mapping $P: \DD
            \times \sB(\XX) \to [0,1]$ such that
        \begin{enumerate*}
        \item for each $(x,a) \in \DD$, $A \mapsto P(x,a, A)$ is a probability measure on $\sB(\XX)$, and
        \item for each $A \in \sB(\XX)$, $(x,a) \mapsto P(x,a,A)$ is a measurable function.
        \end{enumerate*}}
\end{itemize}
Each period, an agent observes a state $x_t \in \XX$ and responds with an
action $a_t \in \Gamma(x_t) \subset \AA$. The agent then obtains a reward
$r(x_t, a_t)$, moves to the next period with a new state $x_{t+1}$, and
repeats the process by choosing $a_{t+1}$ and so on. The state process updates
according to $x_{t+1} \sim P (x_t, a_t, \cdot)$.

Let $\Sigma$ denote the set of \emph{feasible policies}, which we assume to
be nonempty and define as all measurable maps $\sigma: \XX \to \AA$ satisfying
$\sigma (x) \in \Gamma(x)$ for all $x \in \XX$.\footnote{We can and do focus
on stationary Markov policies in what follows since the value of any
nonstationary policy can be obtained by a stationary Markov policy.  See,
e.g., \citet[Section~2.1]{bertsekas2013abstract}.}
Given any policy $\sigma \in
\Sigma$ and initial state $x_0 = x \in \XX$, the \emph{$\sigma$-value
function} $v_\sigma$ is defined by
\begin{equation}\label{eq:vsigma}
	v_\sigma (x) = \sum_{t=0}^\infty \beta^t \EE_x r(x_t, \sigma(x_t))
\end{equation}
whenever the expectation and infinite sum are well-defined. We understand
$v_\sigma (x)$ as the lifetime value of following policy $\sigma$ now and
forever, starting from current state $x$.  

The \emph{value function} associated with this dynamic program is defined at each $x \in \XX$ by 
\begin{equation}\label{eq:szvf}
v^*(x) = \sup_{\sigma \in \Sigma} v_{\sigma} (x).
\end{equation}
A feasible policy $\sigma^*$ is called \emph{optimal} if $v_{\sigma^*} =
v^*$ on $\XX$. The objective of the agent is to find an optimal policy that
attains the maximum lifetime value. 

The Bellman equation associated with the dynamic program is
\begin{equation}\label{eq:bellman}
    v(x)=\sup_{a\in \Gamma(x)}\set{r(x,a)+\beta \EE_{x,a}v(x')},
\end{equation}
where $\EE_{x,a}$ denotes the expectation with respect to the probability
measure $P(x,a,\cdot)$. 

\subsection{The Q-Transform}
\label{ss:q-trans}
As in the examples in Section~\ref{s:ea}, we define the action-value 
function $g$ as $g(x,a)\coloneqq \beta \EE_{x,a}v(x')$. Similar to the
Q-learning approach, we rewrite the Bellman equation in terms of 
the action-value function, which gives
\begin{equation*}
    v(x)=\sup_{a\in \Gamma(x)}\set{r(x,a)+g(x,a)}.
\end{equation*}
Changing $(x,a)$ to $(x',a')$, multiplying both sides by $\beta$, taking the
conditional expectation with respect to $(x,a)$, and using the definition of
$g$, we obtain the transformed Bellman equation 
\begin{equation}\label{eq:tbe}
    g(x,a)=\beta\EE_{x,a}\sup_{a'\in \Gamma(x')}\set{r(x',a')+g(x',a')}.
\end{equation}

Motivated by this derivation, given a real-valued measurable function $g$ on
$\DD$, we define the \emph{transformed Bellman operator} $S$ by
\begin{equation}\label{eq:rfba}
    S g(x,a) \coloneqq \beta \EE_{x,a} \sup_{a' \in \Gamma(x')}
        \set{ r(x', a') + g(x',a') }.
\end{equation}
A feasible policy $\sigma$ is called \emph{$g$-greedy} if
\begin{equation}\label{eq:greedy}
    \sigma(x)\in \argmax_{a \in \Gamma(x)} \set{ r(x,a) + g(x,a) }
    \quad \text{for all $x \in \XX$.}
\end{equation}
At each $x \in \XX$ and $(x,a) \in \DD$, we define 
\begin{equation}\label{eq:rbar}
    \bar r (x) \coloneqq \sup_{a \in \Gamma (x)} r (x,a)%, \quad x \in \XX.
    \quad \text{and} \quad
    \re (x,a) \coloneqq \EE_{x,a} \bar r (x').
\end{equation}
The function $\bar{r}$ can be interpreted as the maximum reward given the
current state $x\in \XX$. The function $\re$ can be interpreted as its
expectation conditional on the previous state and action. We make the
following assumption.

\begin{assumption}\label{a:rbar}
    The function $\bar{r}$ in \eqref{eq:rbar} is bounded above and $\re$ is bounded below.
\end{assumption}

%For now, we assume that the reward function $r$ is bounded above (but not necessarily bounded below). 

%Note that we do not assume that $\bar{r}$ is bounded below. 

Let $\cG$ be the set of bounded measurable functions on $\DD$ and
$\norm{\cdot}$ be the supremum norm. In spite of the potentially unbounded
below rewards, the following result illustrates that $S$ maps elements of
$\cG$ into itself and the dynamic program can be solved via the standard
contraction mapping theorem.

\begin{theorem}
	\label{t:cs0}
	If Assumption~\ref{a:rbar} holds, then $v^*$ in \eqref{eq:szvf} is well-defined,
	\begin{enumerate}
		\item $S \cG \subset \cG$ and $S$ is a contraction mapping on $(\cG, \norm{\cdot})$,
		\item $S$ admits a unique fixed point $g^*$ in $\cG$, and
		\item $S^k g$ converges to $g^*$ at rate $O(\beta^k)$ under $\norm{\cdot}$.
	\end{enumerate}
	Moreover, if there exists a closed subset $\cG_1$ of $\cG$ such that $S \cG_1
	\subset \cG_1$ and a $g$-greedy policy exists for each $g \in \cG_1$, then
	\begin{enumerate}[label=(\alph*)]
		\item $g^*$ is an element of $\cG_1$ and satisfies
		\begin{equation*}
		g^*(x,a) = \beta \EE_{x,a} v^*(x') 
		\quad \text{and} \quad
		v^* (x) = \sup_{a \in \Gamma(x)} \set{ r(x,a) + g^*(x,a) },
		\end{equation*}
		\item at least one optimal policy exists, and
		\item a feasible policy is optimal if and only if it is $g^*$-greedy.
	\end{enumerate}
\end{theorem}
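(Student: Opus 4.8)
The plan is to establish the three numbered claims first, treating $S$ as an abstract operator on the space of bounded measurable functions, and then to build the bridge between fixed points of $S$ and the original dynamic program by a two-sided comparison argument. For the first part, the key observation is that Assumption~\ref{a:rbar} exactly delivers self-mapping: for $g \in \cG$, writing $Sg(x,a) = \beta \EE_{x,a}\sup_{a' \in \Gamma(x')}\{r(x',a') + g(x',a')\}$, the inner supremum is bounded above by $\bar r(x') + \norm{g}$ and below by $\bar r(x')$ wait---more carefully, it is bounded below by $r(x',a'_0) + g(x',a'_0) \geq r(x',a'_0) - \norm{g}$ for any feasible $a'_0$; taking supremum over $a'$ and using $\bar r(x') = \sup_{a'} r(x',a')$ together with the monotone convergence / interchange of supremum and expectation gives $\beta(\re(x,a) - \norm{g}) \leq Sg(x,a) \leq \beta(\sup \bar r + \norm{g})$, which is finite by Assumption~\ref{a:rbar}. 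Monotonicity of $S$ is immediate from monotonicity of $\sup$ and $\EE$, and the discounting property $S(g + M) = Sg + \beta M$ follows since the additive constant $M$ passes through the supremum and the linear expectation. Blackwell's sufficient conditions then give that $S$ is a contraction of modulus $\beta$ on $(\cG, \norm{\cdot})$; Banach's theorem yields the unique fixed point $g^*$ and the geometric rate $\norm{S^k g - g^*} \leq \beta^k \norm{g - g^*}$. I should also separately argue that $v^*$ is well-defined under Assumption~\ref{a:rbar}, i.e.\ that each $v_\sigma$ in \eqref{eq:vsigma} is well-defined in $[-\infty,\infty)$: since $r(x_t,\sigma(x_t)) \leq \bar r(x_t) \leq \sup \bar r < \infty$, the positive part of the discounted sum is summable, so the infinite sum is well-defined with values in $[-\infty, \infty)$, and hence $v^* = \sup_\sigma v_\sigma$ is well-defined.

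For the second part, fix the closed $S$-invariant subset $\cG_1$ on which $g$-greedy policies exist. Since $S\cG_1 \subset \cG_1$ and $\cG_1$ is closed (hence complete), Banach's theorem applied to $S|_{\cG_1}$ shows the fixed point lies in $\cG_1$, giving $g^* \in \cG_1$; in particular a $g^*$-greedy policy $\sigma^*$ exists. The heart of the argument is the identity $g^*(x,a) = \beta\EE_{x,a} v^*(x')$, which I would prove by showing that $w(x) \coloneqq \sup_{a \in \Gamma(x)}\{r(x,a) + g^*(x,a)\}$ coincides with $v^*$. One direction: for any feasible $\sigma$, iterating the inequality $w(x) \geq r(x,\sigma(x)) + g^*(x,\sigma(x)) \geq r(x,\sigma(x)) + \beta \EE_{x,\sigma(x)} w(x')$ --- here using that $g^*$ is a fixed point of $S$, so $g^*(x,a) = \beta\EE_{x,a}\sup_{a'}\{r(x',a') + g^*(x',a')\} = \beta \EE_{x,a} w(x')$ --- and using boundedness of $w$ to kill the $\beta^t \EE_x w(x_t)$ tail term, one gets $w(x) \geq v_\sigma(x)$; taking the supremum over $\sigma$ gives $w \geq v^*$. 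The reverse direction: with $\sigma^*$ a $g^*$-greedy policy we get equality $w(x) = r(x,\sigma^*(x)) + \beta\EE_{x,\sigma^*(x)} w(x')$ at every $x$, so iterating and again discarding the tail (now using boundedness of $w$ from above, which suffices to pass to the limit since the remaining partial sums are monotone once we note $r \leq \bar r$ is bounded above) yields $w(x) = v_{\sigma^*}(x) \leq v^*(x)$. Combining, $w = v^* = v_{\sigma^*}$, so $\sigma^*$ is optimal, $v^*$ satisfies the stated Bellman-type representation, and $g^*(x,a) = \beta \EE_{x,a} w(x') = \beta \EE_{x,a} v^*(x')$. This simultaneously proves (a) and (b).

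For (c), the ``only if'' direction: if $\sigma$ is optimal then $v_\sigma = v^* = w$, and $v_\sigma(x) = r(x,\sigma(x)) + \beta \EE_{x,\sigma(x)} v_\sigma(x') = r(x,\sigma(x)) + \beta\EE_{x,\sigma(x)} v^*(x') = r(x,\sigma(x)) + g^*(x,\sigma(x))$, which equals $w(x) = \sup_a\{r(x,a) + g^*(x,a)\}$, so $\sigma$ attains the argmax, i.e.\ is $g^*$-greedy. The ``if'' direction is exactly the reverse-direction computation above applied to an arbitrary $g^*$-greedy policy in place of $\sigma^*$: it shows any $g^*$-greedy policy $\sigma$ satisfies $v_\sigma = w = v^*$, hence is optimal. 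The main obstacle I anticipate is the careful handling of the tail term $\beta^t \EE_x w(x_t) \to 0$ and the justification for interchanging $\sup$ and $\EE$ and for passing the infinite sum through the expectation when rewards can equal $-\infty$; boundedness of $g^*$ (hence of $w$, using $\bar r$ bounded above) is what makes both work, but one must be slightly careful that $v_\sigma$ is genuinely well-defined (possibly $-\infty$) along the way and that the telescoping identity for $v_\sigma$ is valid in the extended reals --- this is where the asymmetry in Assumption~\ref{a:rbar} ($\bar r$ bounded above, $\re$ bounded below) is doing real work and should be invoked explicitly.
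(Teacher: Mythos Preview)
Your proposal is correct and follows essentially the same route as the paper's proof (which treats Theorem~\ref{t:cs0} as the special case $\kappa\equiv 1$, $\alpha=1$ of Theorem~\ref{t:cs}): verify Blackwell's conditions for the self-map $S$ on $\cG$, then show that $w \coloneqq \sup_{a\in\Gamma(\cdot)}\{r(\cdot,a)+g^*(\cdot,a)\}$ coincides with $v^*$ by a two-sided sandwich, iterating the Bellman inequality for arbitrary $\sigma$ and the Bellman equality for a $g^*$-greedy $\sigma^*$, and finally read off (a)--(c). The paper packages the second half via the factorization $T=MW$, $S=WM$ with $Wv=\beta\EE v'$ and $Mg=\sup_a\{r+g\}$, but the content is the same as your direct argument.

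One point to tighten: you justify killing the tail term $\beta^{N+1}\EE_{x_0} w(x_{N+1})$ by ``boundedness of $w$,'' but under Assumption~\ref{a:rbar} the function $w(x)=\bar r(x)+O(1)$ is only bounded \emph{above}; $\bar r$ need not be bounded below. The paper resolves this by rewriting the tail one step back using precisely the identity you already recorded, $g^*(x,a)=\beta\EE_{x,a} w(x')$: the remainder becomes $\beta^{N}\EE_{x_0} g^*(x_N,\sigma(x_N))$, which is bounded in absolute value by $\beta^{N}\norm{g^*}$ and hence vanishes. With that adjustment your iteration goes through cleanly in both directions, without needing any appeal to monotonicity of the partial sums.
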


\subsection{Existence of Optimal Policy}
\label{ss:exist}

Theorem~\ref{t:cs0} states that under Assumption~\ref{a:rbar}, which is 
satisfied in many applications, the transformed Bellman operator $S$ is a 
contraction. However, it requires a high-level assumption to guarantee that 
a solution to the dynamic program exists.

We now discuss some general sufficient conditions for 
parts~\ref{item:cs.a}--\ref{item:cs.c} of Theorem~\ref{t:cs0} to hold.
To this end, we introduce an additional assumption.

\begin{assumption}\label{a:sc}
    \begin{enumerate*}
        \item The sets $\XX$ and $\AA$ are complete separable metric spaces,
        \item the reward function $r$ is upper semicontinuous,
        \item the feasible correspondence $\Gamma$ is compact-valued and upper hemicontinuous,%
        \footnote{In other words, the set $\set{x \in \XX: \Gamma(x)\subset
            U}$ is open for each open subset $U\subset \AA$. See \citet[Lemma
            17.4]{AliprantisBorder2006} for
            alternative characterizations of upper hemicontinuity.} and
        \item the stochastic kernel $P$ is Feller.\footnote{In other words,
            $(x,a) \mapsto \int h(x') P(x,a,\diff x')$ is bounded and
            continuous whenever $h$ is.} 
    \end{enumerate*}
\end{assumption}

In most applications of interest, Assumption ~\ref{a:sc} is satisfied.

Let $\cG_1$ be the set of upper semicontinuous functions in $\cG$. The
following theorem shows that the conclusions of Theorem~\ref{t:cs0} hold.

\begin{theorem}
	\label{t:exist0}
	If Assumptions~\ref{a:rbar} and \ref{a:sc} hold, then 
	$\cG_1$ is a closed subset of $\cG$, $S \cG_1 \subset \cG_1$, and a $g$-greedy policy exists for each $g \in \cG_1$.
	Consequently, all the conclusions of Theorem~\ref{t:cs0} hold and $g^*,v^*$ are upper semicontinuous.
\end{theorem}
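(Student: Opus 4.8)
The plan is to verify, one by one, the three hypotheses that Theorem~\ref{t:cs0} requires of the subset $\cG_1$: that it is closed in $(\cG,\norm{\cdot})$, that $S$ maps it into itself, and that a $g$-greedy policy exists for every $g\in\cG_1$. Once these are in place, the ``moreover'' clause of Theorem~\ref{t:cs0} delivers conclusions \ref{item:cs.a}--\ref{item:cs.c}, and the upper semicontinuity of $g^*$ follows because $g^*\in\cG_1$ by that same clause; the upper semicontinuity of $v^*$ then follows from the identity $v^*(x)=\sup_{a\in\Gamma(x)}\set{r(x,a)+g^*(x,a)}$ together with the maximum-theorem-type argument used in the third step below.

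First I would show $\cG_1$ is closed. A uniform limit of upper semicontinuous functions is upper semicontinuous: if $g_n\to g$ uniformly and each $g_n$ is u.s.c., then for any $x$ and any $\epsilon>0$ one picks $n$ with $\norm{g_n-g}<\epsilon/2$ and uses the u.s.c.\ of $g_n$ to bound $\limsup_{y\to x} g(y)\leq \limsup_{y\to x} g_n(y)+\epsilon/2\leq g_n(x)+\epsilon/2\leq g(x)+\epsilon$. Since $\cG$ is complete and $\cG_1$ is closed in it, $\cG_1$ is itself a complete metric space, which is all that is needed for the fixed point to land in $\cG_1$.

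Second I would show $S\cG_1\subset\cG_1$. Fix $g\in\cG_1$. That $Sg\in\cG$ is already contained in Theorem~\ref{t:cs0}(1). For upper semicontinuity, write $Sg(x,a)=\beta\EE_{x,a}\,m(x')$ where $m(x')\coloneqq\sup_{a'\in\Gamma(x')}\set{r(x',a')+g(x',a')}$. The inner function $(x',a')\mapsto r(x',a')+g(x',a')$ is u.s.c.\ (sum of two u.s.c.\ functions), $\Gamma$ is compact-valued and u.h.c.\ by Assumption~\ref{a:sc}(3), so by Berge's maximum theorem (upper-semicontinuous version) $m$ is u.s.c.; it is also bounded above by $\beta^{-1}$ times the bound in Assumption~\ref{a:rbar} plus $\norm{g}$, hence bounded. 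A bounded u.s.c.\ function is the pointwise decreasing limit of bounded continuous functions $h_k\downarrow m$; since $P$ is Feller, each $(x,a)\mapsto\EE_{x,a}h_k(x')$ is continuous, and by monotone/dominated convergence $\EE_{x,a}h_k(x')\downarrow\EE_{x,a}m(x')$ pointwise, exhibiting $Sg$ (up to the factor $\beta$) as a decreasing limit of continuous functions, hence u.s.c. Thus $Sg\in\cG_1$.

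Third I would show a $g$-greedy policy exists for each $g\in\cG_1$, and simultaneously record the regularity of $v^*$ needed at the end. The map $(x,a)\mapsto r(x,a)+g(x,a)$ is u.s.c.\ and $\Gamma$ is compact-valued and u.h.c., so for each $x$ the supremum in \eqref{eq:greedy} is attained, the argmax correspondence is nonempty and compact-valued, and the value $x\mapsto\sup_{a\in\Gamma(x)}\set{r(x,a)+g(x,a)}$ is u.s.c.\ (again Berge). A measurable selection from the argmax correspondence exists by the Kuratowski--Ryll-Nardzewski selection theorem — here Assumption~\ref{a:sc}(1) (Polish spaces) and the fact that the argmax correspondence is weakly measurable with closed values are what is used — giving a feasible $g$-greedy policy $\sigma\in\Sigma$. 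Applying this argument with $g=g^*$ yields both the existence of an optimal policy and the upper semicontinuity of $v^*$.

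The main obstacle is the second step: propagating upper semicontinuity through the composition of the inner supremum over $\Gamma(x')$ and the conditional-expectation (integration) operator. The supremum step is the classical Berge argument, but passing u.s.c.\ through $\EE_{x,a}[\cdot]$ is not automatic — it relies essentially on the Feller property of $P$ via the approximation of a bounded u.s.c.\ function from above by bounded continuous functions, and one must be a little careful that this approximation and the interchange of limit with integral are legitimate (they are, by monotone convergence, since everything is bounded). The measurable-selection step in the third part is standard but worth citing precisely, since it is the only place the separability/completeness half of Assumption~\ref{a:sc}(1) is genuinely invoked.
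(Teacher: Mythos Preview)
Your proof is correct and follows essentially the same route as the paper, which reduces Theorem~\ref{t:exist0} to the weighted case (Theorem~\ref{t:exist} with $\kappa\equiv 1$) and then establishes closedness of $\cG_1$, existence of greedy policies, and $S\cG_1\subset\cG_1$ by invoking Lemmas~1 and~7 of \cite{jaskiewicz2011discounted}---whose content is precisely the Berge-type argument and the Feller-plus-monotone-approximation step you spell out. One small correction: the inner function $m(x')=\sup_{a'\in\Gamma(x')}\{r(x',a')+g(x',a')\}$ is bounded \emph{above} but need not be bounded below (since $\bar r$ can be unbounded below, even $-\infty$), so your ``hence bounded'' is not quite right; state the approximation-from-above for u.s.c.\ functions bounded only above, and the monotone-convergence step still goes through because the approximating $h_k$ are bounded.
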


\section{Applications}\label{s:app}

Now we complete the discussion of all applications in Section~\ref{s:ea}. We
also provide additional applications to optimal savings with endogenous labor
choice and optimal consumption-portfolio choice.

\subsection{Optimal Savings (Continued)}
\label{ss:os_cont}

Recall the optimal savings problem of Section~\ref{ss:os}. Following the
setting in \cite{MaStachurskiToda2020JET}, we allow for capital income risk in
a Markov environment. The agent seeks to solve \eqref{eq:os}, except that the
return $R=R_{t+1}$ can also be stochastic.\footnote{The importance of capital
    income risk for wealth dynamics is highlighted in \cite{Toda2014JET},
    \cite{benhabib2015wealth}, \cite{CaoLuo2017},
    \cite{StachurskiToda2019JET}, 
\cite{fagereng2020heterogeneity} and \cite{hubmer2020comprehensive}, among
others.} For concreteness, suppose that
\begin{equation}
    R_t=R(z_t,\xi_t)\quad \text{and}\quad y_t=y(z_t,\xi_t),\label{eq:Ry}
\end{equation} 
where $R,y$ are nonnegative measurable functions, $z_t$ is a finite state
Markov chain, and $\xi_t$ is an \iid shock that could be vector-valued.

To apply the general theory in Section~\ref{s:gf}, we assume that the utility
function $u$ is upper semicontinuous, increasing, bounded above, and
\begin{equation}
\inf_z \EE u(y(z,\xi))>-\infty.\label{eq:Ezuy3}
\end{equation}

Let us verify that the assumptions in Section~\ref{s:gf} are satisfied. The
state $x=(w,z)$ consists of the financial wealth $w$ and the exogenous Markov
state $z$. The action is consumption $a=c$. The feasible correspondence is
$\Gamma(x)=[0,w]$, which is the borrowing constraint \eqref{eq:os_borrow}. The
reward function is $r(x,a)=u(c)$. The stochastic kernel $P$ is defined through
the (exogenous) stochastic kernel of the Markov state $z$, the
distribution of the \iid shock $\xi$, and the budget constraint
\eqref{eq:os_budget}. The functions $\bar{r}$ and $\re$ in \eqref{eq:rbar} are
defined by
\begin{align*}
    \bar{r}(x)&=\sup_{0\leq c\leq w}u(c)=u(w),\\
    \re(x,a)&=\EE_{z}u(R'(w-c)+y')\\
    &\geq \EE_{z}u(y')=\EE_z u(y(z',\xi'))>-\infty,
\end{align*}
where we have used the monotonicity of $u$ and \eqref{eq:Ezuy3}. Since by
assumption $u$ is bounded above, so is $\bar{r}$. Therefore,
Assumption~\ref{a:rbar} holds. Since $u$ is upper semicontinuous, $\Gamma$ is
nonempty compact valued, and $\set{z_t}$ is a finite state Markov chain,
Assumption~\ref{a:sc} is also satisfied. Therefore, the conclusions of
Theorem~\ref{t:cs0} hold.

\begin{remark}\label{rem:MST}
    \cite{MaStachurskiToda2020JET} (henceforth MST) solve the optimal savings
    problem using the Euler equation iteration.  Our approach is different
    because it uses the (transformed) value function iteration under different
    assumptions.  While we require that the utility function is bounded above,
    MST does not require it. 
    On the other hand, MST requires the utility function to be concave, differentiable, and satisfy 
    \begin{equation}
    \sup_{z} \EE_z u'(y)<\infty.\label{eq:EzuprimeY}
    \end{equation}
    The following argument shows that our assumptions are
    weaker.\footnote{\cite{MaStachurskiToda2020JET} allow the discount factor
    $\beta$ to be random. It is straightforward to extend our theory to a
setting with stochastic discounting.} Since $u$ is concave and differentiable
under the assumptions of MST, we obtain
    \begin{equation*}
        u(1)-u(y)\leq u'(y)(1-y)\leq u'(y),
    \end{equation*}
    where we have used $u'\geq 0$ and $y\geq 0$. Taking the conditional
    expectation on $z$, we obtain
    \begin{equation*}
    u(1)-\EE_zu(y)\leq \EE_zu'(y)<\infty
    \end{equation*}
    by \eqref{eq:EzuprimeY}, implying \eqref{eq:Ezuy3}.

    More importantly, MST requires the condition $G_{\beta R}<1$, where
    $G_{\beta R}$ is the long run geometric average of $\beta R_t$. Using our
    approach, we do not require any assumption (other than nonnegativity and
    measurability) on the returns $R_t$.
\end{remark}

\subsection{Optimal Default (Continued)}
\label{ss:od_cont}

Recall the optimal default problem studied in Section~\ref{ss:are}. This
setting is a special case of our framework. In particular, 
\begin{equation*}
    x = (w, y, z), \quad
    a = (w', i), \quad
    \XX = [-b, \infty) \times \YY \times \ZZ \quad \text{and} \quad
    \AA = [-b, \infty) \times \set{0,1},
\end{equation*}
where $i$ is a discrete choice variable taking values in $\set{0,1}$, and $\YY$
and $\ZZ$ are respectively the range spaces of $\set{y_t}$ and $\set{z_t}$. The
reward function $r$ reduces to
\begin{equation*}
    r(w,y,w', i) = 
    \begin{cases*}
    u(y) & if $i=0$, \\
    u(w + y - w'/R) & if $i=1$.
    \end{cases*}
\end{equation*}
We have shown that $S \cG \subset \cG$, where $\cG$ is the set of
bounded measurable functions on $\ZZ \times [-b, \infty) \times \set{0,1}$.
Moreover, $\re$ satisfies
\begin{equation*}
    \re (z, w') 
         = \EE_{z} \max \set{ u(y'), u \left( w' + y' + b/R \right) }
        \geq \EE_z u(y'),
\end{equation*}
which is bounded below by \eqref{eq:Ezuy}. Let $\cG_1$ be the set of functions
in $\cG$ that is increasing in its second-to-last argument and upper
semicontinuous.  Through similar steps to the proof of Theorem~\ref{t:exist0},
one can show that $S \cG_1 \subset \cG_1$ and a $g$-greedy policy exists for
each $g \in \cG_1$.  As a result, all the conclusions of Theorem~\ref{t:cs0}
are true.

\subsection{Job Search (Continued)}
\label{ss:js_cont}

Recall the job search problem of Section~\ref{ss:jsp}. This problem fits into
the framework of Section~\ref{ss:t} if we let choice $a$ 
take values in $\set{0,1}$, where $0$ represents the decision to stop
and $1$ represents continue,
\begin{equation*}
    x = (w,z,c), \;\; 
    \XX = (0, \infty)^3, \;\;
    \AA = \set{0,1}, \;\;
    \Gamma(x) = \set{0,1}, \;\;
    \DD = (0, \infty)^3 \times \set{0,1}
\end{equation*}
and the reward function is
    $r(x,a) = u(w)/(1-\beta)$ if $a = 0$ and
    $r(x,a) = u(c)$ if $a=1$.
We have shown that $S \cG \subset \cG$, where $\cG$ is the set of bounded
measurable functions on $(0, \infty)$.  Note that, in this case, the function
$\re (x,a)$ reduces to $\re (z) = \EE_{z} \max \set{ u(w') / (1 - \beta),
u(c') }.$ Then $\re$ is bounded below by the inequality \eqref{eq:EmaxXY} and
\eqref{eq:Ezuy2}.  Since in addition the action set is finite, a $g$-greedy
policy always exists for each $g \in \cG$.  Let $\cG_1 \coloneqq \cG$. The
analysis above implies that all the conclusions of Theorem~\ref{t:cs0} hold.

\subsection{Optimal Savings with Endogenous Labor Choice}\label{ss:os_labor}

As another example application, consider the optimal savings problem with
endogenous labor supply
\begin{align*}
    &\maximize && \EE \sum_{t=0}^\infty \beta^tu(c_t,l_t)\\
    &\st && 0\leq c_t\leq w_t+y_tl_t,\\
    &&& 0\leq l_t\leq 1,\\
    &&& w_{t+1}=R_{t+1}(w_t-c_t+y_tl_t).
\end{align*}
Here $c_t$ is consumption, $l_t$ is labor supply, $y_t$ is wage, $w_t$ is
financial wealth at time $t$ excluding current labor income (see
Footnote~\ref{fn:timing}), and $R_{t+1}\geq 0$ is the gross return on wealth
between time $t$ and $t+1$. As before assume that $R,y$ take the form
\eqref{eq:Ry}, where $z_t$ is a finite state Markov chain and $\xi_t$ is an
\iid shock.

The state $x=(w,y,z)$ consists of the financial wealth $w$, 
wage $y$, and the exogenous Markov state $z$. The action $a=(c,l)$ consists
of consumption and labor supply. The feasible correspondence is
\begin{equation*}
    \Gamma(x)=\set{(c,l)\in \RR^2: 0\leq c\leq w+yl~\text{and}~0\leq l\leq 1}.
\end{equation*}
Suppose that the utility function $u$ is bounded above and increasing in its
first argument. The function $\bar{r}$ in \eqref{eq:rbar} is defined by
\begin{equation*}
    \bar{r}(x)=\sup_{0\leq l\leq 1}\sup_{0\leq c\leq w+yl}u(c,l)=\sup_{0\leq l\leq 1}u(w+yl,l),
\end{equation*}
which is bounded above. Noting we can bound $\bar{r}$ from below as
\begin{equation*}
    \bar{r}(x)=\sup_{0\leq l\leq 1}u(w+yl,l)\geq \sup_{0\leq l\leq 1}u(yl,l),
\end{equation*}
the function $\re$ in \eqref{eq:rbar} becomes bounded below if
\begin{equation}
    \inf_z \EE \sup_{0\leq l\leq 1}u(y(z,\xi)l,l)>-\infty,\label{eq:Ezuy4}
\end{equation}
which is analogous to \eqref{eq:Ezuy3}. In summary, by a similar argument to
Section~\ref{ss:os_cont}, the conclusions of Theorem~\ref{t:cs0} hold if $u$
is upper semicontinuous, bounded above, increasing in its first argument, and
\eqref{eq:Ezuy4} holds.

\subsection{Optimal Consumption-Portfolio Problem}\label{ss:portfolio}

As yet another example application, consider the optimal consumption-portfolio
problem
\begin{align*}
    &\maximize && \EE \sum_{t=0}^\infty \beta^t u(c_t)\\
    &\st && 0\leq c_t\leq w_t,\\
    &&& \theta_t\in \Theta(z_t),\\
    &&& w_{t+1}=R(\theta_t,z_{t+1},\xi_{t+1})(w_t-c_t)+y_{t+1}.
\end{align*}
Here $c_t$ is consumption, $z_t$ is an exogenous finite state Markov chain,
$\Theta(z_t)\subset \RR^J$ is the set of admissible portfolios of financial
assets $j=1,\dots,J$ in state $z_t$ ($\theta_t$ is a portfolio),
$y_t=y(z_t,\xi_t)$ is non-financial income ($\xi_t$ is an \iid shock that
could be vector-valued), $w_t$ is financial wealth at time $t$ including
current non-financial income, and $R(\theta_t,z_{t+1},\xi_{t+1})$ is the gross
return on wealth between time $t$ and $t+1$ given the portfolio $\theta_t$ and
shocks $(z_{t+1},\xi_{t+1})$.

This problem is a special case of our framework. The state $x=(w,z)$ consists
of the financial wealth $w$ and the exogenous Markov state $z$. The action
$a=(c,\theta)$ consists of consumption and portfolio. The feasible
correspondence is $\Gamma(x)=[0,w]\times \Theta(z)$. By the same argument as
in Section~\ref{ss:os_cont}, if the utility function $u$ is increasing,
bounded above, and \eqref{eq:Ezuy3} holds, then so does
Assumption~\ref{a:rbar}. Under additional regularity conditions ($u$ is upper
semicontinuous and the portfolio constraint $\Theta(z)$ is nonempty and
compact for each $z$), Assumption~\ref{a:sc} is satisfied and the conclusions
of Theorem~\ref{t:cs0} hold.

\section{Extensions}\label{s:ext}

In this section, we extend our theory in two important directions. First, we
illustrate how the idea of Q-transform could be extended to handle
rewards that are potentially unbounded above as well as below. Second, we
extend the theory of Section~\ref{s:gf} to solve dynamic programs with 
risk-sensitive preferences.

\subsection{Unbounded Above Rewards}\label{ss:ubabove}

In Section~\ref{s:gf}, we assume that the reward function is bounded above,
although it could be unbounded below.  To handle rewards that are potentially
unbounded above and below, we extend our theory by introducing a weighting
function $\kappa$, which is a continuous function mapping $\XX$ to $[1,
\infty)$. Let $\cG$ be the set of measurable functions $g: \DD \to \RR$ such
that $g$ is bounded below and 
\begin{equation}
\label{eq:kap_norm}
    \norm{g}_\kappa \coloneqq 
    \sup_{(x,a) \in \DD}\frac{\abs{g(x,a)}}{\kappa (x)} < \infty.
\end{equation}
The pair $(\cG, \norm{\cdot}_\kappa)$ is a Banach space (see, e.g., \cite{bertsekas2013abstract}). We make the following assumption.

\begin{assumption}\label{a:ws}
    \begin{enumerate*}
        \item\label{item:ws1} There exist constants $d \in  \RR_+$ and $\alpha
            \in (0, 1/\beta)$ such that $\bar r (x) \leq d \kappa (x)$ and
            $\EE_{x,a} \kappa (x') \leq \alpha \kappa(x)$ for all $(x,a) \in
            \DD$, and
        \item\label{item:ws2} $\re$ in \eqref{eq:rbar} is bounded below.
    \end{enumerate*}
\end{assumption}

\begin{remark}
	Note that Assumption~\ref{a:rbar} is a special case of Assumption~\ref{a:ws}
	by setting $\kappa(x)\equiv 1$ and $\alpha=1$. More importantly, 
	Assumption~\ref{a:ws} relaxes the classical weighted supremum norm 
	assumptions greatly (see, e.g., \cite{wessels1977markov} or 
	\cite{bertsekas2013abstract}), in the sense that we allow the ratio 
	of the reward function to the weighting function $\bar r/\kappa$ to 
	be unbounded from below, a case where the classical weighted supremum 
	norm approach struggles (recall Footnote~\ref{fn:ubdd_below}).
\end{remark}

Although rewards are potentially unbounded above and below, the dynamic
program can be solved by the operator $S$, as the following theorem shows.

\begin{theorem}
	\label{t:cs}
	If Assumption~\ref{a:ws} holds, then $v^*$ in \eqref{eq:szvf} is well-defined,
	\begin{enumerate}
		\item\label{item:cs.1} $S \cG \subset \cG$ and $S$ is a contraction mapping on $(\cG, \norm{\cdot}_\kappa)$,
		\item\label{item:cs.2} $S$ admits a unique fixed point $g^*$ in $\cG$, and
		\item\label{item:cs.3} $S^k g$ converges to $g^*$ at rate $O((\alpha \beta)^k)$ under $\norm{\cdot}_{\kappa}$.
	\end{enumerate}
    Moreover, if there exists a closed subset $\cG_1$ of $\cG$ such that $S \cG_1
    \subset \cG_1$ and a $g$-greedy policy exists for each $g \in \cG_1$, then
    \begin{enumerate}[label=(\alph*)]
        \item\label{item:cs.a} $g^*$ is an element of $\cG_1$ and satisfies
		\begin{equation*}
		g^*(x,a) = \beta \EE_{x,a} v^*(x') 
		\quad \text{and} \quad
		v^* (x) = \sup_{a \in \Gamma(x)} \set{ r(x,a) + g^*(x,a) },
		\end{equation*}
        \item\label{item:cs.b} at least one optimal policy exists, and
        \item\label{item:cs.c} a feasible policy is optimal if and only if it is $g^*$-greedy.
    \end{enumerate}
\end{theorem}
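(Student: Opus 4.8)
The plan is to prove parts (1)--(3) by showing that $S$ is a contraction of modulus $\alpha\beta$ on $(\cG,\norm{\cdot}_\kappa)$ and invoking Banach's fixed point theorem, and then to prove (a)--(c) by a principle-of-optimality argument organized around the reconstructed value function $\hat v(x)\coloneqq \sup_{a\in\Gamma(x)}\set{r(x,a)+g^*(x,a)}$. The argument specializes to Theorem~\ref{t:cs0} on setting $\kappa\equiv 1$ and $\alpha=1$.

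First I would dispatch the well-definedness of $v^*$ and the self-map property. Iterating $\EE_{x,a}\kappa(x')\leq\alpha\kappa(x)$ gives $\EE_x\kappa(x_t)\leq\alpha^t\kappa(x)$, and since $r(x_t,\sigma(x_t))\leq\bar r(x_t)\leq d\kappa(x_t)$, the positive part of the objective in \eqref{eq:vsigma} has expectation at most $d\kappa(x)/(1-\alpha\beta)<\infty$; hence each $v_\sigma(x)$ and therefore $v^*(x)$ is well-defined in $[-\infty,\infty)$. For $g\in\cG$ with constant lower bound $m$, the bound $r(x',a')+g(x',a')\leq (d+\norm{g}_\kappa)\kappa(x')$ together with $\EE_{x,a}\kappa(x')\leq\alpha\kappa(x)$ gives $Sg(x,a)\leq\alpha\beta(d+\norm{g}_\kappa)\kappa(x)$, so $\norm{Sg}_\kappa<\infty$; and monotonicity of the supremum with $g\geq m$ gives $Sg(x,a)\geq\beta\EE_{x,a}\sup_{a'}\set{r(x',a')+m}=\beta\re(x,a)+\beta m$, which is bounded below by a constant by Assumption~\ref{a:ws}. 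So $S$ maps $\cG$ into itself and, moreover, preserves the (closed) set of functions bounded below by any fixed constant. For the contraction estimate, $\abs{\sup_{a'}\set{r(x',a')+g_1(x',a')}-\sup_{a'}\set{r(x',a')+g_2(x',a')}}\leq\sup_{a'}\abs{g_1(x',a')-g_2(x',a')}\leq\norm{g_1-g_2}_\kappa\kappa(x')$ yields $\abs{Sg_1(x,a)-Sg_2(x,a)}\leq\alpha\beta\norm{g_1-g_2}_\kappa\kappa(x)$, so $\norm{Sg_1-Sg_2}_\kappa\leq\alpha\beta\norm{g_1-g_2}_\kappa$ with $\alpha\beta<1$. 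Banach's theorem on the complete space $(\cG,\norm{\cdot}_\kappa)$ then gives a unique fixed point $g^*\in\cG$ and the geometric rate $\norm{S^k g-g^*}_\kappa\leq(\alpha\beta)^k\norm{g-g^*}_\kappa$, which is (1)--(3). I would also record $\inf g^*\geq\beta\underline r/(1-\beta)>-\infty$ (feed $g^*$'s constant lower bound $\underline r\coloneqq\inf\re$ back into the estimate above); this constant bound is used below.

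Now assume the closed set $\cG_1$ with $S\cG_1\subset\cG_1$ on which greedy policies exist. Iterating $S$ from a point of $\cG_1$ and using closedness gives $g^*\in\cG_1$, so a $g^*$-greedy $\sigma^*$ exists and the supremum in $\hat v$ is attained at $\sigma^*$. From $g^*=Sg^*$ one reads off the two identities $g^*(x,a)=\beta\EE_{x,a}\hat v(x')$ and $\hat v(x)=\sup_a\set{r(x,a)+\beta\EE_{x,a}\hat v(x')}$, so $\hat v$ solves \eqref{eq:bellman}. The key step is $\hat v=v^*$. Unrolling the Bellman inequality along an arbitrary feasible $\sigma$ gives $\hat v(x)\geq\EE_x\sum_{t=0}^{n-1}\beta^t r(x_t,\sigma(x_t))+\beta^n\EE_x\hat v(x_n)$; the first term tends to $v_\sigma(x)$, and the error tends to $0$ because $\hat v\leq(d+\norm{g^*}_\kappa)\kappa$ forces $\beta^n\EE_x\hat v(x_n)\leq(\alpha\beta)^n(d+\norm{g^*}_\kappa)\kappa(x)\to 0$, while $g^*(x,a)=\beta\EE_{x,a}\hat v(x')\geq\inf g^*$ forces $\beta^n\EE_x\hat v(x_n)=\beta^{n-1}\EE_x g^*(x_{n-1},\sigma(x_{n-1}))\geq\beta^{n-1}\inf g^*\to 0$. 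Hence $\hat v\geq v_\sigma$ for all $\sigma$, i.e.\ $\hat v\geq v^*$; the same unrolling along $\sigma^*$ holds with equality, giving $\hat v=v_{\sigma^*}\leq v^*$. Therefore $\hat v=v^*=v_{\sigma^*}$, which is (a), and $\sigma^*$ is optimal, which is (b). For (c): if $\sigma$ is $g^*$-greedy the equality-unrolling gives $v_\sigma=\hat v=v^*$; conversely, if $v_\sigma=v^*$ then $v_\sigma(x)=r(x,\sigma(x))+\beta\EE_x v_\sigma(x_1)=r(x,\sigma(x))+g^*(x,\sigma(x))$ equals $v^*(x)=\sup_a\set{r(x,a)+g^*(x,a)}$, forcing $\sigma(x)\in\argmax_a\set{r(x,a)+g^*(x,a)}$, i.e.\ $\sigma$ is $g^*$-greedy per \eqref{eq:greedy}.

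The hard part will be the transversality step $\beta^n\EE_x\hat v(x_n)\to 0$: the two sides of the squeeze come from structurally different bounds — the weighted bound $\hat v\leq c\kappa$ controls the upper side, while only the \emph{constant} lower bound on $g^*$ (not a bound on $\hat v$ itself) controls the lower side — and this asymmetry, combined with the non-invertibility of $v\mapsto g$, is precisely why one must route the optimality argument through the reconstructed $\hat v$ rather than import classical Bellman theory. Secondary care is needed for the measurability of $Sg$ and of greedy selections, and for the $-\infty$ values of $r$; these are handled by the standing measurable-selection hypotheses and by splitting the objective into positive and negative parts as above.
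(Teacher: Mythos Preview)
Your proposal is correct and follows essentially the same route as the paper: contraction on $(\cG,\norm{\cdot}_\kappa)$, then reconstruct $\hat v=Mg^*$, unroll the Bellman inequality, kill the remainder, and deduce (a)--(c). One simplification you are missing: the paper writes the $n$-step remainder as $\beta^n\EE_x\,g^*(x_n,\sigma(x_n))$ and bounds it \emph{symmetrically} via $\lvert g^*(x,a)\rvert\leq\norm{g^*}_\kappa\kappa(x)$ and $\EE_x\kappa(x_n)\leq\alpha^n\kappa(x)$, so your ``hard part'' with the asymmetric squeeze is unnecessary---the constant lower bound on $g^*$ is not needed there at all.
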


Let $\cG_1$ be the set of upper semicontinuous functions in $\cG$ and
\begin{equation}
\label{eq:ke}
    \ke (x,a) \coloneqq \EE_{x,a} \kappa(x').  
\end{equation}
In most applications Assumption ~\ref{a:sc} is satisfied. The following
theorem shows that the continuity of $\ke$ is sufficient for the
conclusions of Theorem~\ref{t:cs} to hold.
\begin{theorem}\label{t:exist}
	If Assumptions~\ref{a:ws} and \ref{a:sc} hold and $\ke$ in \eqref{eq:ke} is continuous, then 
    $\cG_1$ is a closed subset of $\cG$, $S \cG_1 \subset \cG_1$, and a
    $g$-greedy policy exists for each $g \in \cG_1$. Consequently, all the
    conclusions of Theorem~\ref{t:cs} hold and $g^*,v^*$ are upper
    semicontinuous.
\end{theorem}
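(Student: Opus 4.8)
The plan is to verify the three structural claims in turn — $\cG_1$ is closed in $(\cG,\norm{\cdot}_\kappa)$, $S\cG_1\subset\cG_1$, and every $g\in\cG_1$ admits a $g$-greedy policy — and then to read the last sentence off Theorem~\ref{t:cs}, since these three properties are exactly the hypotheses of its ``moreover'' clause: parts~\ref{item:cs.a}--\ref{item:cs.c} follow, $g^*\in\cG_1$ is upper semicontinuous, and $v^*(x)=\sup_{a\in\Gamma(x)}\set{r(x,a)+g^*(x,a)}$ is upper semicontinuous by the maximum theorem, using that $r+g^*$ is upper semicontinuous and $\Gamma$ is compact-valued and upper hemicontinuous. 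For closedness I would use that upper semicontinuity is sequential in the metric space $\DD$: if $g_n\in\cG_1$ and $\norm{g_n-g}_\kappa\to0$ with $g\in\cG$, then for any $(x_k,a_k)\to(x_0,a_0)$ in $\DD$ the set $\set{\kappa(x_k)}$ is bounded (by continuity of $\kappa$), say by $C$, so $\abs{g_n(x_k,a_k)-g(x_k,a_k)}\leq C\norm{g_n-g}_\kappa$ for all $k$; feeding this uniform-along-the-sequence convergence into the upper semicontinuity of each $g_n$ and letting $n\to\infty$ gives $\limsup_k g(x_k,a_k)\leq g(x_0,a_0)$, so $g\in\cG_1$.

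The substantive claim is $S\cG_1\subset\cG_1$. Fix $g\in\cG_1$. By part~\ref{item:cs.1} of Theorem~\ref{t:cs}, $Sg\in\cG$, so it only remains to show $Sg$ is upper semicontinuous. Since $r+g$ is upper semicontinuous on $\DD$ (with no $\infty-\infty$ because $g$ is real valued), the maximum theorem (e.g.\ \citet[Theorem~17.31]{AliprantisBorder2006}), applied with $\Gamma$ compact-valued and upper hemicontinuous, shows that $h(x')\coloneqq\sup_{a'\in\Gamma(x')}\set{r(x',a')+g(x',a')}$ is upper semicontinuous on $\XX$; moreover $h\leq\bar r+\norm{g}_\kappa\kappa\leq C\kappa$ with $C\coloneqq d+\norm{g}_\kappa$ by Assumption~\ref{a:ws}\ref{item:ws1}. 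Hence $h'\coloneqq C\kappa-h\geq0$ is lower semicontinuous, and on the metric space $\XX$ it is the increasing pointwise limit of nonnegative bounded continuous functions $\psi_n$ (e.g.\ the truncated Moreau--Yosida approximants $\psi_n\coloneqq\min\set{n,\inf_y\set{h'(y)+n\rho(x',y)}}$, with $\rho$ the metric of $\XX$). Monotone convergence gives $\EE_{x,a}h'(x')=\sup_n\EE_{x,a}\psi_n(x')$, each $(x,a)\mapsto\EE_{x,a}\psi_n(x')$ is continuous since $P$ is Feller, and a supremum of continuous functions is lower semicontinuous, so $(x,a)\mapsto\EE_{x,a}h'(x')$ is lower semicontinuous. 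Since $Sg$ is finite valued and $\ke(x,a)=\EE_{x,a}\kappa(x')$ is finite, $h$ and $\kappa$ are $P(x,a,\cdot)$-integrable and
\begin{equation*}
    Sg(x,a)=\beta\,\EE_{x,a}h(x')=\beta\bigl(C\,\ke(x,a)-\EE_{x,a}h'(x')\bigr),
\end{equation*}
which, as $\ke$ is continuous, is $\beta$ times the difference of a continuous and a lower semicontinuous function, hence upper semicontinuous; thus $Sg\in\cG_1$.

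For the greedy-policy claim, $r+g$ is upper semicontinuous and $\Gamma$ is compact-valued and upper hemicontinuous, so $\argmax_{a\in\Gamma(x)}\set{r(x,a)+g(x,a)}$ is nonempty for each $x$, and the measurable maximum theorem (e.g.\ \citet[Theorem~18.19]{AliprantisBorder2006}) supplies a measurable selector $\sigma\in\Sigma$, which is $g$-greedy. Combined with the previous two paragraphs and part~\ref{item:cs.1} of Theorem~\ref{t:cs}, this closes the argument.

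The one nonroutine point is the upper-semicontinuity step for $Sg$: the Feller property only yields continuity of $(x,a)\mapsto\EE_{x,a}\phi(x')$ for bounded continuous $\phi$, whereas $h$ is merely upper semicontinuous and only bounded above by the unbounded function $C\kappa$. This is exactly where the continuity of $\ke$ enters — subtracting $h$ from $C\kappa$ produces a nonnegative lower semicontinuous integrand amenable to monotone approximation by bounded continuous functions, and continuity of $\ke$ is what allows one to reassemble $\EE_{x,a}h(x')=C\,\ke(x,a)-\EE_{x,a}\bigl(C\kappa(x')-h(x')\bigr)$ as an upper semicontinuous function of $(x,a)$. Given Theorem~\ref{t:cs} and standard selection theorems, the rest is bookkeeping.
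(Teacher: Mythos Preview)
Your proposal is correct and follows essentially the same route as the paper. The paper's proof is terser, outsourcing the two substantive steps---upper semicontinuity of $h_g(x')=\sup_{a'\in\Gamma(x')}\{r(x',a')+g(x',a')\}$ together with existence of a measurable $g$-greedy selector, and preservation of upper semicontinuity under $(x,a)\mapsto\EE_{x,a}h_g(x')$---to Lemmas~1 and~7 of \cite{jaskiewicz2011discounted}, whereas you unpack the latter via the Moreau--Yosida/monotone-convergence argument; the key inputs, namely the bound $h_g\leq (d+\norm{g}_\kappa)\kappa$ and the continuity of $\ke$, play exactly the same role in both arguments.
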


\begin{example}
As an example application of Theorem~\ref{t:exist}, consider the optimal
savings problem \eqref{eq:os}, where the utility function $u$ can now be
unbounded both from above and below. 
Suppose that $u$ is upper semicontinuous, increasing, satisfies 
\eqref{eq:Ezuy3}, and there exist constants $p>0$ and $q\in \RR$ such that
\begin{equation}
    u(c)\leq pc+q \quad \text{for all $c>0$}.\label{eq:ucond}
\end{equation}
This condition trivially holds if $u$ is concave, and we can choose $q$
arbitrarily large.

Suppose that asset return and income take the form
\begin{equation}
    R_t=R(\xi_t)\quad \text{and}\quad y_t=y(z_t,\xi_t),\label{eq:Ryub}
\end{equation} 
where $R,y$ are nonnegative measurable functions, $z_t$ is a finite state
Markov chain, and $\xi_t$ is an \iid shock that could be
vector-valued.\footnote{Unlike the setting in Section~\ref{ss:os_cont}, the
    return is permitted to depend only on the \iid shock $\xi$.  Treating the
general case requires generalizing Theorem~\ref{t:cs} further such that
$\alpha$ depends on $x$.} In addition, assume
\begin{equation}
    Y\coloneqq \sup_{z} \EE_z y(z',\xi')<\infty, \quad \beta<1, 
    \quad \text{and} \quad \beta \EE R<1.\label{eq:Ey}
\end{equation}

As in Section~\ref{ss:os_cont}, the state is $x=(w,z)$ and the action is
$a=c$. To apply Theorem~\ref{t:exist}, define the weighting function by
$\kappa(x)=pw+q$, where $q>1$. Since
\begin{equation*}
    \bar{r}(x)=\sup_{0\leq c\leq w}u(c)=u(w)\leq pw+q=\kappa(x)
\end{equation*}
by \eqref{eq:ucond}, we can set $d=1$ in Assumption~\ref{a:ws}.  As we have
seen in Section~\ref{ss:os_cont}, the condition \eqref{eq:Ezuy3} implies that
$\re$ in \eqref{eq:rbar} is bounded below.  Therefore, to satisfy
Assumption~\ref{a:ws}, it remains to verify $\EE_{x,a}\kappa(x')\leq \alpha
\kappa(x)$ for some $\alpha\in (0,1/\beta)$. 
To this end, note that
\begin{equation*}
    \frac{\EE_{x,a}\kappa(x')}{\kappa(x)}=
    \frac{p\EE_z(R'(w-c)+y')+q}{pw+q}\leq 
    \frac{p\EE_z(R'w+y')+q}{pw+q}.
\end{equation*}
Since the right hand side is a monotone function of $w$ and achieves the
supremum at either $w=0$ or $w=\infty$, we obtain
\begin{equation}\label{eq:alpha_estim}
    \sup_{(x,a) \in \DD} 
    \frac{\EE_{x,a}\kappa(x')}{\kappa(x)}\leq \sup_z 
    \max\set{\frac{p\EE_z y'+q}{q},\EE_z R'}\leq \max\set{\frac{pY+q}{q},\EE R},
\end{equation}
where we have used the fact that $R$ does not depend on $z$ (by
\eqref{eq:Ryub}) and \eqref{eq:Ey}.  Since $q>1$ can be taken arbitrarily
large and $\frac{pY+q}{q}\to 1$ as $q\to\infty$, the right hand side of
\eqref{eq:alpha_estim} can be made arbitrarily close to $\max\set{1,\EE R}$,
which is strictly smaller than $1/\beta$ by \eqref{eq:Ey}. Therefore, we can
indeed choose $\alpha\in (0,1/\beta)$ such that 
Assumption~\ref{a:ws} holds with $d=1$ and $\kappa(x)=pw+q$ for large enough $q>1$.
Assumption~\ref{a:sc} trivially holds. Finally,
\begin{equation*}
    \ke (x,a) \coloneqq \EE_{x,a} \kappa(x')=p\EE_z(R'(w-c)+y')+q
\end{equation*}
is clearly continuous in $(x,a)=(w,z,c)$. Therefore, all the assumptions of
Theorem~\ref{t:exist} are satisfied.
\end{example}

\begin{remark}
    Under the assumption that discounting is constant and the asset return
    depends only on the \iid shock as in \eqref{eq:Ryub}, the assumptions in
    \cite{MaStachurskiToda2020JET} are strictly stronger than ours, since they
    assume \eqref{eq:Ey} and the concavity of $u$ (which implies
    \eqref{eq:ucond}). (See also Remark~\ref{rem:MST}.)
\end{remark}

\subsection{Risk-Sensitive Preferences}

%\cite{bauerle2018stochastic} study an optimal growth model in the presence of
%risk-sensitive preference, in which the agent is risk averse in future utility
%(in addition to being risk averse in future consumption).\footnote{This is in
%    comparison with the classical additively separable preference model, where
%    the agent is risk neutral in future utility.  The additively separable
%    preference is a special limiting case by letting $\gamma\to 0$ and using
%    the property $\lim_{\gamma\to 0}-\frac{1}{\gamma}\log \EE \me^{-\gamma
%    X}=\EE X$ for a random variable $X$.  Further comments on risk-sensitive
%preference can be found in \cite{follmer2004stochastic},
%\cite{bauerle2011markov}, \cite{bauerle2018stochastic} and references cited
%therein.  Models with risk-sensitive preference are also related to robust
%control problems, as discussed in \cite{hansen2008robustness}.} They provide
%valuable optimality results, although these results cannot treat many common
%period utility functions, such as CRRA with relative risk aversion at least 1,
%because they exclude all utility functions that are unbounded below.
%Furthermore, an optimal growth model is a rather special dynamic program.
%Here we aim to present a general theory of dynamic programming with
%risk-sensitive preferences via transformation of the Bellman equation.

We consider the general setting in Sections~\ref{s:gf} but with recursive 
(non-additive) preferences. Unlike the additively
separable case, in order to define the value function and optimality in the
recursive case, let $\gamma>0$ be the agent's risk-sensitive coefficient. 
Given any feasible policy $\sigma\in \Sigma$ and measurable function 
$v:\XX\to \RR\cup \set{-\infty}$, let
\begin{equation}
\label{eq:Tsig}
    T_\sigma v(x) \coloneqq 
    r(x,\sigma(x))-\frac{\beta}{\gamma} \log \EE_{x,\sigma(x)}\me^{-\gamma v(x')}
\end{equation}
for all $x\in \XX$ whenever the expectation is well-defined. Recall $\bar r$ 
defined in \eqref{eq:rbar}. 
In this case, the \emph{$\sigma$-value function} $v_\sigma$ is defined at 
each state $x \in \XX$ by
\begin{equation*}
    v_\sigma(x) = \limsup_{n \to \infty} T_\sigma^n \bar{r} (x).
\end{equation*}
Setting aside the issue of existence and uniqueness for now,
$v_\sigma(x)$ can be interpreted as the lifetime value of following policy
$\sigma$ forever, starting from current state $x$.  The value function $v^*$
and optimal policy $\sigma^*$ are defined as in \eqref{eq:szvf}.  The Bellman
equation associated with the dynamic program with risk-sensitive coefficient
$\gamma>0$ is
\begin{equation}\label{eq:bellmanRS}
    v(x)=\sup_{a\in \Gamma(x)}\set{r(x,a)-\frac{\beta}{\gamma} \log \EE_{x,a}\me^{-\gamma v(x')}}.
\end{equation}
Letting $g(x,a)\coloneqq -\frac{\beta}{\gamma} \log \EE_{x,a}\me^{-\gamma
v(x')}$, analogous to the derivation of \eqref{eq:tbe}, we obtain the
transformed Bellman equation
\begin{equation}\label{eq:tbeRS}
    g(x,a)=-\frac{\beta}{\gamma}\log 
    \EE_{x,a}\exp\left(-\gamma \sup_{a'\in \Gamma(x')}\set{r(x',a')+g(x',a')}\right).
\end{equation}
We define the \emph{transformed Bellman operator} $S$ by letting $Sg(x,a)$ be 
the right hand side of \eqref{eq:tbeRS} for each $g$ in the space of candidate
action-value functions (to be specified below). A feasible policy $\sigma\in\Sigma$ is
called \emph{$g$-greedy} if \eqref{eq:greedy} holds. 

We first consider the case with rewards that are bounded
above.  Let $\cG$ be the set of bounded measurable functions on $\DD$ and
$\norm{\cdot}$ be the supremum norm.   The following theorem
generalizes Theorems~\ref{t:cs0} and \ref{t:exist0} to dynamic programs with
risk-sensitive preferences.

\begin{theorem}\label{t:rs0}
    If Assumption~\ref{a:rbar} holds for $\re$ defined by
    \begin{equation}\label{eq:reRS}
    \re(x,a)\coloneqq -\frac{1}{\gamma}\log \EE_{x,a}\me^{-\gamma \bar{r}(x')},
    \end{equation}
	\begin{enumerate}
		\item\label{item:rs0.1} $S \cG \subset \cG$ and $S$ is a contraction mapping on $(\cG, \norm{\cdot})$,
		\item\label{item:rs0.2} $S$ admits a unique fixed point $g^*$ in $\cG$, and
		\item\label{item:rs0.3} $S^k g$ converges to $g^*$ at rate $O(\beta^k)$ under $\norm{\cdot}$.
	\end{enumerate}
    Moreover, if Assumption~\ref{a:sc} holds, then $v^*$ is well-defined and
    \begin{enumerate}[label=(\alph*)]
        \item\label{item:rs0.a} $g^*,v^*$ are upper semicontinuous and satisfy
		\begin{equation*}
		g^*(x,a) = -\frac{\beta}{\gamma}\log \EE_{x,a} \me^{-\gamma v^*(x')}
		\quad \text{and} \quad
		v^* (x) = \sup_{a \in \Gamma(x)} \set{ r(x,a) + g^*(x,a) },
		\end{equation*}
        \item\label{item:rs0.b} at least one optimal policy exists, and
        \item\label{item:rs0.c} a feasible policy is optimal if and only if it is $g^*$-greedy.
    \end{enumerate}
\end{theorem}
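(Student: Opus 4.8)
The plan is to follow the proofs of Theorems~\ref{t:cs0} and \ref{t:exist0} almost verbatim, replacing the linear operator $v\mapsto \beta\EE_{x,a}v(x')$ everywhere by the entropic certainty equivalent $N_{x,a}v\coloneqq -\tfrac\beta\gamma\log\EE_{x,a}\me^{-\gamma v(x')}$, and checking that the three properties of this map the proofs actually use survive: it is monotone in $v$, it satisfies $N_{x,a}(v+c)=N_{x,a}v+\beta c$ for real constants $c$, and it is continuous from above (it commutes with decreasing limits, by monotone convergence inside the logarithm). For parts~\ref{item:rs0.1}--\ref{item:rs0.3}, given $g\in\cG$ write $h_g(x')\coloneqq \sup_{a'\in\Gamma(x')}\set{r(x',a')+g(x',a')}$, so that $\bar r(x')-\norm g\le h_g(x')\le \bar r(x')+\norm g$; applying the monotone map $N_{x,a}$ and invoking \eqref{eq:reRS} gives $\beta(\re(x,a)-\norm g)\le Sg(x,a)\le \beta(\sup\bar r+\norm g)$, so $S\cG\subset\cG$ by Assumption~\ref{a:rbar}. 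Blackwell's sufficient conditions then follow immediately: monotonicity of $S$ from monotonicity of $\sup$ and of $N$, and $S(g+M)=Sg+\beta M$ for $M\ge 0$ by pulling $\me^{-\gamma M}$ out of the expectation under the logarithm. Banach's fixed point theorem yields \ref{item:rs0.1}--\ref{item:rs0.3}.

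For the second part, add Assumption~\ref{a:sc} and let $\cG_1$ be the upper semicontinuous functions in $\cG$; it is $\norm\cdot$-closed since a uniform limit of u.s.c.\ functions is u.s.c. If $g\in\cG_1$, then $h_g$ is bounded and u.s.c.\ by Berge's maximum theorem (using $r$ u.s.c., $\Gamma$ compact-valued and u.h.c.), hence $\me^{-\gamma h_g}$ is bounded, positive and l.s.c.; the Feller property pushes this to $(x,a)\mapsto\EE_{x,a}\me^{-\gamma h_g(x')}$ being l.s.c.\ and bounded away from $0$ and $\infty$ (approximate the l.s.c.\ integrand from below by bounded continuous functions and apply monotone convergence), and composing with $-\tfrac\beta\gamma\log(\cdot)$ returns a u.s.c.\ function, so $S\cG_1\subset\cG_1$. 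A $g$-greedy policy exists for each $g\in\cG_1$ by Berge (nonempty $\argmax$) together with a measurable selection theorem. Let $g^*\in\cG_1$ be the fixed point from the first part and set $\hat v(x)\coloneqq \sup_{a\in\Gamma(x)}\set{r(x,a)+g^*(x,a)}$, which is u.s.c.\ and bounded above; since $Sg^*=g^*$ reads $g^*(x,a)=-\tfrac\beta\gamma\log\EE_{x,a}\me^{-\gamma\hat v(x')}$, the function $\hat v$ solves the Bellman equation \eqref{eq:bellmanRS}, i.e.\ $\hat v=B\hat v$ for the Bellman operator $B$, and $\hat v=T_{\sigma^*}\hat v$ for any $g^*$-greedy $\sigma^*$.

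It remains to identify $\hat v$ with $v^*$ and read off \ref{item:rs0.a}--\ref{item:rs0.c}. With $m\coloneqq\inf g^*$ and $M\coloneqq\sup g^*$ (both finite) we have $\bar r+m\le\hat v\le\bar r+M$, so monotonicity of each $T_\sigma$ and $T_\sigma^n(v+c)=T_\sigma^n v+\beta^n c$ give $v_\sigma=\limsup_n T_\sigma^n\bar r=\limsup_n T_\sigma^n\hat v$ for every feasible $\sigma$. Since $\hat v=B\hat v\ge T_\sigma\hat v$, the sequence $T_\sigma^n\hat v$ is decreasing; by continuity from above its limit is a fixed point of $T_\sigma$, equals $v_\sigma$, and is $\le\hat v$. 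Hence $v_\sigma\le\hat v$ for all $\sigma$, so $v^*$ is well-defined and $v^*\le\hat v$. Conversely, for a $g^*$-greedy $\sigma^*$ we have $T_{\sigma^*}^n\hat v=\hat v$ for all $n$, hence $v_{\sigma^*}=\hat v$, so $\hat v\le v^*$; therefore $v^*=\hat v=v_{\sigma^*}$, giving \ref{item:rs0.a} and \ref{item:rs0.b} (with $\sigma^*$ optimal and $g^*(x,a)=-\tfrac\beta\gamma\log\EE_{x,a}\me^{-\gamma v^*(x')}$). For \ref{item:rs0.c}: if $\sigma$ is $g^*$-greedy the same computation gives $T_\sigma\hat v=\hat v$, so $v_\sigma=\hat v=v^*$ and $\sigma$ is optimal; conversely if $\sigma$ is optimal then $v_\sigma=v^*=\hat v$, and since $v_\sigma$ is a fixed point of $T_\sigma$ this says $r(x,\sigma(x))+g^*(x,\sigma(x))=\sup_{a\in\Gamma(x)}\set{r(x,a)+g^*(x,a)}$ at every $x$, i.e.\ $\sigma$ is $g^*$-greedy.

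The main obstacle is the bookkeeping with $-\infty$: because $r$ can equal $-\infty$, the iterates $T_\sigma^n\bar r$ and the $\sigma$-value $v_\sigma$ can take the value $-\infty$, whence $\me^{-\gamma v_\sigma}$ equals $+\infty$ and the relevant expectations are infinite. The device that handles this cleanly is the one used above: once anchored at the bounded u.s.c.\ function $\hat v$, the sequence $T_\sigma^n\hat v$ is monotone \emph{decreasing}, so monotone convergence applies inside the logarithm, $T_\sigma$ is continuous from above, and one may pass the limit through $T_\sigma$ and identify $v_\sigma$ as a genuine fixed point regardless of how many intermediate values are infinite. A secondary technical point, needed for $S\cG_1\subset\cG_1$, is the preservation of lower semicontinuity under a Feller kernel, which rests on approximating the bounded l.s.c.\ integrand $\me^{-\gamma h_g}$ from below by bounded continuous functions.
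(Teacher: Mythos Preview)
Your proposal is correct and follows essentially the same architecture as the paper's proof: Blackwell's conditions for \ref{item:rs0.1}--\ref{item:rs0.3}, the decomposition $T=MW$, $S=WM$, the candidate $\hat v=Mg^*$, and the sandwich $\bar r+m\le \hat v\le \bar r+M$ together with $T_\sigma^n(\bar r+c)=T_\sigma^n\bar r+\beta^n c$ to identify $\limsup_n T_\sigma^n\hat v$ with $v_\sigma$.

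The one genuine methodological difference is your treatment of the converse in \ref{item:rs0.c}. The paper argues via Fatou's lemma: it writes $M_{\sigma^*}g^*=T_{\sigma^*}(\limsup_n T_{\sigma^*}^n v^*)$, uses $\EE_{x,\sigma^*(x)}\liminf_n \me^{-\gamma T_{\sigma^*}^n v^*}\le \liminf_n\EE_{x,\sigma^*(x)}\me^{-\gamma T_{\sigma^*}^n v^*}$ to get $W(\limsup)\ge \limsup W$, and then chains inequalities to reach $M_{\sigma^*}g^*\ge Mg^*$. You instead isolate the stronger fact that $T_\sigma$ is continuous from above (monotone convergence inside the logarithm), so the decreasing sequence $T_\sigma^n\hat v$ has a limit that is a \emph{genuine} fixed point of $T_\sigma$, namely $v_\sigma$, for every $\sigma$; then optimality of $\sigma$ gives $\hat v=v_\sigma=T_\sigma v_\sigma=T_\sigma\hat v$, which is exactly the greedy condition. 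Your route is cleaner and reusable, and it makes explicit why the $-\infty$ bookkeeping causes no trouble. The paper's Fatou route is slightly more robust in settings where one only has an inequality rather than monotone convergence, but here both work.

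One small imprecision worth fixing: you write that $h_g$ is ``bounded'' and hence $\me^{-\gamma h_g}$ is bounded. In fact $h_g$ is only bounded \emph{above} (since $\bar r$ need not be bounded below), so $\me^{-\gamma h_g}$ is bounded below away from $0$ but may be unbounded above. This does not damage the argument: the approximation of a nonnegative l.s.c.\ function from below by bounded continuous functions still applies, and the resulting expectation is bounded above because $\EE_{x,a}\me^{-\gamma h_g(x')}\le \me^{\gamma\norm g}\EE_{x,a}\me^{-\gamma\bar r(x')}$, which is finite since $\re$ is bounded below. Just adjust the wording.
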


Next, we study the case with rewards that are unbounded above and below. 
In what follows, $\XX$ is a partially ordered set.
Similar to Section~\ref{ss:ubabove}, we introduce a weighting function $\kappa$,
which is continuous increasing and maps $\XX$ to $[1, \infty)$. Let 
$\cG_2$ be the set of measurable function $g :\DD \to \RR$ such that $g(x,a)$ is 
increasing in $x$, bounded below, and \eqref{eq:kap_norm} holds.

Suppose the state process evolves according to
\begin{equation}
\label{eq:x_path}
    x_{t+1} = f(x_t, a_t, \epsilon_{t+1}),
\end{equation}
where $f$ is a measurable function, and $\{\epsilon_t\}$ is an {\sc iid} 
innovation process taking values in $\RR^m$. For each $t$, we write 
$\epsilon_t = (\epsilon_{1t}, \dots, \epsilon_{mt})$ and make the 
following assumption.

\begin{assumption}
	\label{a:rs_ubdd}
	\begin{enumerate*}
		\item\label{item:rs_ubdd.1} $r(x,a)$ is increasing in $x$ and $f(x, a, \epsilon')$ 
		is increasing in $(x, \epsilon')$,
		\item\label{item:rs_ubdd.2} $\Gamma(x_1) \subset \Gamma(x_2)$ if $x_1 \leq x_2$, 
		and 
		\item\label{item:rs_ubdd.3} $\epsilon_{1t}, \dots, \epsilon_{mt}$ are independent 
		for each $t$.
	\end{enumerate*}
\end{assumption}

The following theorem extends Theorem~\ref{t:cs} and Theorem~\ref{t:exist}
to dynamic decision problems with risk-sensitive preference.

\begin{theorem}
	\label{t:rs}
	If Assumptions~\ref{a:ws} and \ref{a:rs_ubdd} hold for $\hat r$ defined in
	\eqref{eq:reRS}, then 
	\begin{enumerate}
		\item\label{item:rs.1} $S \cG_2 \subset \cG_2$ and $S$ is a contraction 
		mapping on $(\cG_2, \|\cdot\|_\kappa)$.
		\item\label{item:rs.2} $S$ admits a unique fixed point $g^*$ in $\cG_2$, and
		\item\label{item:rs.3} $S^kg$ converges to $g^*$ at rate $O((\alpha \beta)^k)$ 
		under $\|\cdot\|_\kappa$.
	\end{enumerate}
	Moreover, if Assumption~\ref{a:sc} holds, then $v^*$ is well-defined and 
	\begin{enumerate}[label=(\alph*)]
		\item\label{item:rs.a} $g^*, v^*$ are upper semicontinuous and satisfy
		\begin{equation*}
			g^*(x,a) = - \frac{\beta}{\gamma} \log \EE_{x,a} \me^{-\gamma v^*(x')}
			\quad \text{and} \quad
			v^*(x) = \sup_{a \in \Gamma(x)} \left\{
			    r(x,a) + g^*(x,a)
			\right\},
		\end{equation*}
		\item\label{item:rs.b} at least one optimal policy exists, and 
		\item\label{item:rs.c} a feasible policy is optimal if and only if it is 
		$g^*$-greedy.		
	\end{enumerate}
\end{theorem}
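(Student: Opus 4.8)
The strategy is to combine the weighted-supremum-norm argument behind Theorem~\ref{t:cs} with the entropic-operator estimates behind Theorem~\ref{t:rs0}. The single genuinely new device is a positive-correlation (Harris/FKG) inequality, needed because the certainty-equivalent operator $Z \mapsto -\tfrac{1}{\gamma}\log \EE_{x,a}\me^{-\gamma Z}$ is monotone and translation invariant but \emph{not} additive, so the weighting function cannot simply be pulled through it as in the additively separable case. Throughout, write $\psi_g(x') \coloneqq \sup_{a'\in\Gamma(x')}\set{r(x',a')+g(x',a')}$, so that $Sg(x,a) = -\tfrac{\beta}{\gamma}\log\EE_{x,a}\me^{-\gamma\psi_g(x')}$. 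First I would show $S\cG_2\subset\cG_2$. Fix $g\in\cG_2$ and let $b\geq 0$ satisfy $g\geq -b$. Since $\psi_g\geq\bar r-b$, monotonicity of the certainty equivalent and translation invariance give $Sg(x,a)\geq \beta\re(x,a)-\beta b$, which is bounded below by Assumption~\ref{a:ws}\ref{item:ws2}; in particular $\EE_{x,a}\me^{-\gamma\psi_g(x')}\in(0,\infty)$, so $Sg$ is finite. Since $\psi_g\leq\bar r+\norm{g}_\kappa\kappa\leq(d+\norm{g}_\kappa)\kappa$, Jensen's inequality for $t\mapsto\me^{-\gamma t}$ together with $\EE_{x,a}\kappa(x')\leq\alpha\kappa(x)$ yields $Sg(x,a)\leq\alpha\beta(d+\norm{g}_\kappa)\kappa(x)$, hence $\norm{Sg}_\kappa<\infty$; monotonicity of $Sg$ in $x$ follows from Assumption~\ref{a:rs_ubdd}\ref{item:rs_ubdd.1}--\ref{item:rs_ubdd.2} (then $\psi_g$ is increasing on $\XX$, $f(\cdot,a,\epsilon')$ is increasing, and the certainty equivalent is monotone). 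Measurability of $Sg$ is handled exactly as in Theorem~\ref{t:cs}. Since $(\cG,\norm{\cdot}_\kappa)$ is a Banach space and the ``increasing in $x$'' constraint passes to $\norm{\cdot}_\kappa$-limits, $\cG_2$ is a complete metric space.

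The crux is the contraction estimate. Let $g_1,g_2\in\cG_2$ and put $\delta\coloneqq\norm{g_1-g_2}_\kappa$, so $\psi_{g_1}\leq\psi_{g_2}+\delta\kappa$ pointwise. Monotonicity of the certainty equivalent gives $Sg_1(x,a)\leq-\tfrac{\beta}{\gamma}\log\EE_{x,a}\!\left[\me^{-\gamma\psi_{g_2}(x')}\me^{-\gamma\delta\kappa(x')}\right]$. Writing $x'=f(x,a,\epsilon')$, both $\epsilon'\mapsto\me^{-\gamma\psi_{g_2}(f(x,a,\epsilon'))}$ and $\epsilon'\mapsto\me^{-\gamma\delta\kappa(f(x,a,\epsilon'))}$ are coordinatewise decreasing, because $\psi_{g_2}$ and $\kappa$ are increasing on $\XX$ and $f$ is increasing in $\epsilon'$. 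Hence, by Harris's inequality for the product law of $\epsilon'$ (Assumption~\ref{a:rs_ubdd}\ref{item:rs_ubdd.3}), $\EE_{x,a}\!\left[\me^{-\gamma\psi_{g_2}(x')}\me^{-\gamma\delta\kappa(x')}\right]\geq\EE_{x,a}\me^{-\gamma\psi_{g_2}(x')}\cdot\EE_{x,a}\me^{-\gamma\delta\kappa(x')}$, while Jensen applied to $t\mapsto\me^{-\gamma\delta t}$ with $\EE_{x,a}\kappa(x')\leq\alpha\kappa(x)$ gives $\EE_{x,a}\me^{-\gamma\delta\kappa(x')}\geq\me^{-\gamma\delta\alpha\kappa(x)}$. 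Combining, $Sg_1(x,a)\leq Sg_2(x,a)+\alpha\beta\,\delta\,\kappa(x)$; interchanging $g_1$ and $g_2$ and dividing by $\kappa(x)$ gives $\norm{Sg_1-Sg_2}_\kappa\leq\alpha\beta\norm{g_1-g_2}_\kappa$. Since $\alpha\in(0,1/\beta)$ the modulus is $<1$, so Banach's theorem yields the unique fixed point $g^*\in\cG_2$ and the $O((\alpha\beta)^k)$ rate, proving \ref{item:rs.1}--\ref{item:rs.3}.

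For the ``moreover'' claims, add Assumption~\ref{a:sc} and let $\cG_1$ be the upper semicontinuous functions in $\cG_2$; as in Theorem~\ref{t:exist} this set is $\norm{\cdot}_\kappa$-closed, $S\cG_1\subset\cG_1$ (Berge's theorem for upper semicontinuous functions controls $\psi_g$, and the Feller property together with the bound $\psi_g\geq\bar r-b$ carries both the semicontinuity and the integrability through the certainty equivalent), and a measurable $g$-greedy selector exists by the measurable maximum theorem. Hence $g^*\in\cG_1$ and $\bar g(x)\coloneqq\sup_{a\in\Gamma(x)}\set{r(x,a)+g^*(x,a)}$ is upper semicontinuous with the supremum attained; the identity $g^*=Sg^*$ then reads as $g^*(x,a)=-\tfrac{\beta}{\gamma}\log\EE_{x,a}\me^{-\gamma\bar g(x')}$, so $\bar g$ solves the Bellman equation \eqref{eq:bellmanRS}. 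Following the proof of Theorem~\ref{t:rs0}: for any feasible $\sigma$ one has $\bar g\geq T_\sigma\bar g$, and using translation invariance to control $\bar r$ relative to $\bar g$ (note $\bar r-b\leq\bar g\leq\bar r+\norm{g^*}_\kappa\kappa$) one gets $\limsup_n T_\sigma^n\bar r\leq\bar g$, i.e.\ $v_\sigma\leq\bar g$; for the $g^*$-greedy $\sigma^*$ one has $\bar g=T_{\sigma^*}\bar g$, and the policy-specific transformed operator is again a contraction, so $v_{\sigma^*}=\bar g$. Therefore $v^*=\bar g$ is well-defined and upper semicontinuous, which gives \ref{item:rs.a} and the existence of an optimal policy \ref{item:rs.b}; and since any optimal $\sigma$ satisfies $v_\sigma=T_\sigma v_\sigma=T_\sigma\bar g$, optimality is equivalent to $g^*$-greediness, giving \ref{item:rs.c}.

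The main obstacle is the contraction step. In the additively separable setting of Theorem~\ref{t:cs} one simply bounds $Sg_1-Sg_2$ by $\beta\,\EE_{x,a}\,\abs{\psi_{g_1}-\psi_{g_2}}\leq\alpha\beta\,\delta\,\kappa(x)$, but the logarithm and the nonlinearity of $\EE_{x,a}\me^{-\gamma(\cdot)}$ block that route. The role of Assumption~\ref{a:rs_ubdd} together with the innovation representation \eqref{eq:x_path} is precisely to make $\psi_{g_2}(x')$ and $\kappa(x')$ comonotone functions of the independent innovations, so that Harris's inequality factors the joint exponential moment and restores the clean contraction constant $\alpha\beta$. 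Verifying that these monotonicity hypotheses propagate through $S$ (the self-map step) and through the policy operators $T_\sigma$ (the optimality step) absorbs most of the remaining work.
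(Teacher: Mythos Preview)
Your proposal is correct and follows essentially the same route as the paper: the paper also establishes the self-map and contraction properties of $S$ on $\cG_2$ via the Fortuin--Kasteleyn--Ginibre inequality (your Harris inequality) applied to the increasing functions $\psi_g\circ f(x,a,\cdot)$ and $\kappa\circ f(x,a,\cdot)$ of the independent innovation, deriving the subadditivity $W(h_1+h_2)\leq Wh_1+Wh_2$ and hence $S(g+K\kappa)\leq Sg+\alpha\beta K\kappa$, which is exactly your bound rewritten. For the optimality part the paper likewise sandwiches $\bar v$ between $\bar r+L$ and $\bar r+\norm{g^*}_\kappa\kappa$ and uses the FKG-based estimate $T_\sigma^n(\bar r+K\kappa)\leq T_\sigma^n\bar r+(\alpha\beta)^nK\kappa$ to identify $\limsup_n T_\sigma^n\bar v$ with $v_\sigma$, which is the precise content of your ``policy-specific transformed operator is again a contraction'' remark; the remaining steps (upper semicontinuity, greedy selectors, the Fatou argument for claim~\ref{item:rs.c}) are taken verbatim from Theorems~\ref{t:exist} and~\ref{t:rs0} as you indicate.
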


\begin{example}
	\label{ex:opt_grw}
	\cite{bauerle2018stochastic} study an optimal growth model with risk-sensitive 
	preference. In their setting,
	\begin{equation*}
	    \XX = \AA = \RR_+, \quad
	    %x = x, \quad 
	    %a = a, \quad
	    \Gamma(x) = [0, x] \quad \text{and} \quad
	    r(x,a) = u(x-a),
	\end{equation*}
	where $x$ is capital, $a$ is the amount of investment, and $u$ is the utility function.
	\cite{bauerle2018stochastic} assume that the utility function $u$ is bounded below. 
		
	Consider, for example, $u(c) = \log c$ and
	\begin{equation*}
	    x_{t+1} = f(a_t, \epsilon_{t+1}) = \eta a_t + \epsilon_{t+1}, 
	    \quad \{\epsilon_t\} \iidsim LN(\mu, \sigma^2),
	\end{equation*}
	where $\eta > 0$. This setup is common in applied works and is not covered 
	by \cite{bauerle2018stochastic}, because the logarithmic utility is unbounded 
	below. However, Theorem~\ref{t:rs} can be applied. Clearly, 
	Assumptions~\ref{a:sc} and \ref{a:rs_ubdd} hold. Moreover, since
	$\bar r(x) = u(x)$ on $\XX$, for all $(x,a) \in \DD$,
	\begin{equation*}
	    \EE _{x,a} \me^{-\gamma \bar r(x')} %= \EE_{x,a} \me^{-\gamma u(x')}
	    =\EE \me^{-\gamma u(\eta a + \epsilon')} 
	    \leq \EE \me^{-\gamma u(\epsilon')} 
	    = \me^{-\gamma \mu + \gamma^2 \sigma^2/2} < \infty.
	\end{equation*}
	Hence $\hat r(x,a) \geq \mu - \gamma \sigma^2/2$
	on $\DD$ and Assumption~\ref{a:ws}\ref{item:ws2} holds.
	Let $\bar \epsilon \coloneqq \EE \epsilon_t$.
	\begin{itemize}
		\item If $\eta \leq 1$, then Assumption~\ref{a:ws}\ref{item:ws1} holds
		for all $\beta \in (0,1)$ by letting $\alpha \in (1, 1/\beta)$
		and $\kappa(x) \coloneqq x + \bar \epsilon /(\alpha -1)$.
		\item If $\eta > 1$, then Assumption~\ref{a:ws}\ref{item:ws1} holds
		for all $\beta \in (0,1/\eta)$ by letting $\alpha \coloneqq \eta$
		and $\kappa(x) \coloneqq x + \bar \epsilon / (\alpha - 1)$.
	\end{itemize}
    Assumption~\ref{a:ws} is now verified. Therefore, all the conclusions
    of Theorem~\ref{t:rs} hold.
\end{example}

\begin{remark}
	In the state evolution path \eqref{eq:x_path}, we impose $\{\epsilon_t\}$ to be an 
	{\sc iid} innovation process. Indeed, this restriction can be relaxed. For 
	example, we can set $\{\epsilon_t\}$ to be a finite Markov chain, or 
	$\log \epsilon_t = \mu(z_t) + \sigma(z_t) \xi_t$, where $\{z_t\}$ is a finite
	Markov chain and $\{\xi_t\}$ is an {\sc iid} innovation process with finite
	moment generating function, etc.
	By expanding the state vector to accommodate the exogenous state $\epsilon_t$ 
	or $z_t$ and adjusting Assumption~\ref{a:rs_ubdd} mildly, the conclusions of
	Theorem~\ref{t:rs} still hold in these generalized settings.
\end{remark}

\section{Conclusion}
\label{s:conc}

We proposed a new approach to solving dynamic programs with unbounded rewards,
based on Q-transforms. The essence of our approach lies in
transforming an originally unbounded dynamic program into a bounded one. We
demonstrated via a range of applications that our method fits well with
stochastic dynamic decision problems with unbounded below rewards and
can be extended to handle crucial dynamic programs that are both
unbounded above and unbounded below. In particular, the Q-transform approach
is not limited to solving dynamic programs that are additively separable. We
showed that dynamic decision problems with risk-sensitive preference and
unbounded (above and below) period reward functions are also covered by
Q-transform. Although exploring further recursive preference decision problems
via our approach goes beyond the scope of this study, the theory of
Q-transform presented here should serve as a solid foundation for new work
along these lines.

\section{Appendix: Proof of Main Results}
\label{s:appendix}

Since Theorems~\ref{t:cs0}, \ref{t:exist0} are special cases of
Theorems~\ref{t:cs}, \ref{t:exist} by setting $\kappa(x)\equiv 1$ and
$\alpha=1$, we only prove the latter.
We first show that the $\sigma$-value function $v_\sigma$ in \eqref{eq:vsigma}
and the value function $v^*$ in \eqref{eq:szvf} are well-defined.

\begin{lemma}\label{lm:vsigma}
    If Assumption~\ref{a:ws}\ref{item:ws1} holds, then for any feasible policy
    $\sigma\in \Sigma$ and initial state $x_0=x\in \XX$, the quantities
    $v_\sigma(x)$ in \eqref{eq:vsigma} and $v^*(x)$ in \eqref{eq:szvf} are
    well-defined in $\RR \cup \set{-\infty}$.
\end{lemma}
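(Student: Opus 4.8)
The plan is to produce a single finite upper bound, depending on the initial state $x$ but not on the policy, for the discounted expected rewards generated along any feasible path. Such a bound makes the infinite series in \eqref{eq:vsigma} unambiguous (it can only fail to converge by diverging to $-\infty$) and, being uniform in $\sigma$, bounds $v^*$ from above as well.

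First I would iterate the drift condition in Assumption~\ref{a:ws}\ref{item:ws1}. Fix $\sigma\in\Sigma$ and $x_0=x$, and let $\set{(x_t,a_t)}$ be the induced controlled process with $a_t=\sigma(x_t)$ and $x_{t+1}\sim P(x_t,a_t,\cdot)$. Since $\kappa$ is continuous, hence Borel measurable, the tower property combined with $\EE_{x_t,a_t}\kappa(x_{t+1})\leq\alpha\kappa(x_t)$ yields $\EE_x\kappa(x_{t+1})\leq\alpha\EE_x\kappa(x_t)$, so by induction $\EE_x\kappa(x_t)\leq\alpha^t\kappa(x)<\infty$ for every $t$. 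Next, from $r(x_t,a_t)\leq\bar r(x_t)\leq d\kappa(x_t)$ I get $\EE_x r(x_t,\sigma(x_t))^+\leq d\alpha^t\kappa(x)<\infty$, so each $\EE_x r(x_t,\sigma(x_t))$ is a well-defined element of $[-\infty,\infty)$ (decompose $r=r^+-r^-$: the expectation of $r^+$ is finite, that of $r^-$ lies in $[0,\infty]$, and $-\infty$ is the only value $r$ itself can take, so no $\infty-\infty$ arises). Using $\alpha\beta<1$,
\begin{equation*}
	\sum_{t=0}^\infty\beta^t\,\EE_x r(x_t,\sigma(x_t))^+
	\leq d\kappa(x)\sum_{t=0}^\infty(\alpha\beta)^t
	=\frac{d\kappa(x)}{1-\alpha\beta}<\infty .
\end{equation*}
Writing $v_\sigma(x)$ as this finite series minus $\sum_t\beta^t\EE_x r(x_t,\sigma(x_t))^-\in[0,\infty]$ shows $v_\sigma(x)$ is well-defined in $\RR\cup\set{-\infty}$, with $v_\sigma(x)\leq d\kappa(x)/(1-\alpha\beta)$. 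Since this bound is independent of $\sigma$ and $\Sigma$ is nonempty, $v^*(x)=\sup_{\sigma\in\Sigma}v_\sigma(x)$ is likewise well-defined, lying in $[-\infty,\,d\kappa(x)/(1-\alpha\beta)]\subset\RR\cup\set{-\infty}$.

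I expect the only subtle point to be the extended-real bookkeeping: one must confirm that each inner expectation $\EE_x r(x_t,\sigma(x_t))$ is itself well-defined before assembling the series, and that the series is never an indeterminate form — both of which rest on the fact that the positive part of the reward is integrable with a geometrically summable bound, while $-\infty$ is the only problematic value $r$ can attain. The remaining ingredients — the induction on $t$, the use of the tower property, and the split into $r^{\pm}$ — are routine.
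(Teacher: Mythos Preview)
Your proof is correct and follows essentially the same route as the paper: iterate the drift condition $\EE_{x,a}\kappa(x')\leq\alpha\kappa(x)$ to obtain $\EE_x\kappa(x_t)\leq\alpha^t\kappa(x)$, combine with $r\leq\bar r\leq d\kappa$ to bound each term by $d\kappa(x)(\alpha\beta)^t$, and sum the geometric series. You are more explicit than the paper about the extended-real bookkeeping (the $r^{\pm}$ decomposition and the well-definedness of each inner expectation), but the argument is the same.
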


\begin{proof}
    Using the definition of $\bar{r}$ in \eqref{eq:rbar} and
    Assumption~\ref{a:ws}\ref{item:ws1}, the $t$-th term on the right hand
    side of \eqref{eq:vsigma} can be bounded above as
    \begin{equation*}
        \beta^t \EE_x r(x_t,\sigma(x_t))\leq \beta^t \EE_x
        \bar{r}(x_t)\leq \beta^t \EE_x d\kappa(x_t)\leq
         \beta^t\alpha^t d\kappa(x)=d\kappa(x)(\alpha\beta)^t.
    \end{equation*}
    Since by assumption $0<\alpha\beta<1$, summing over $t$, we obtain
    \begin{equation*}
        v_\sigma(x)
        =\sum_{t=0}^\infty \beta^t \EE_x r(x_t,\sigma(x_t))
        \leq \sum_{t=0}^\infty d\kappa(x)(\alpha\beta)^t=\frac{d\kappa(x)}{1-\alpha\beta}<\infty.
    \end{equation*}
    Therefore, $v_\sigma(x)$ in \eqref{eq:vsigma} is well-defined in $\RR \cup
    \set{-\infty}$. Taking the supremum over $\sigma\in \Sigma$, we obtain
    \begin{equation*}
        v^*(x)=\sup_{\sigma\in\Sigma} 
        v_\sigma(x)\in \left[-\infty,\frac{d\kappa(x)}{1-\alpha\beta}\right],
    \end{equation*}
    so $v^*(x)$ in \eqref{eq:szvf} is also well-defined.
\end{proof}

\begin{proof}[Proof of Theorem~\ref{t:cs}]
    To see claim~\ref{item:cs.1} holds, we first show that $S \cG \subset
    \cG$. Fix $g \in \cG$. By the definition of $\cG$, there is a lower bound
    $L \in \RR$ such that $g \geq L$. Then
    \begin{align*}
        S g (x,a) 
        &\geq \beta \EE_{x,a} \sup_{a' \in \Gamma(x')} \set{
        r(x',a') + L } = \beta \EE_{x,a} \left[
            \sup_{a' \in \Gamma(x')} r(x',a') + L 
        \right]  \\
        &= \beta \left[\EE_{x,a} \bar{r} (x') + L \right] 
        = \beta \left[ \re (x,a) + L \right].
    \end{align*}
    Since by assumption $\re$ is bounded below, so is $Sg$. Moreover, by Assumption~\ref{a:ws},
    \begin{align*}
        S g (x,a) &\leq \beta \EE_{x,a} \set{ \bar r (x')
        + \sup_{a' \in  \Gamma(x')} g(x',a') }    \\
        &\leq \beta \EE_{x,a}(d +  \norm{g}_\kappa) \kappa (x') 
        \leq \alpha \beta (d + \norm{g}_\kappa) \kappa (x) 
    \end{align*}
    for all $(x,a ) \in \DD$. Hence, $Sg / \kappa$ is bounded above. Since in
    addition $Sg$ is bounded below and $\kappa \geq 1$, we have
    $\norm{Sg}_\kappa < \infty$, implying $Sg \in \cG$.
    
    Obviously, $S$ is an monotone operator, i.e., $S g_1 \leq S g_2$ whenever
    $g_1 \leq g_2$. To see that $S$ is a contraction mapping on $(\cG,
    \norm{\cdot}_\kappa)$ of modulus $\alpha \beta$, it suffices to show
    that\footnote{For all $g_1, g_2 \in \cG$, we have 
    	$g_1(x,a) \leq g_2(x,a) + \norm{g_1-g_2}_\kappa \kappa (x)$. 
    	The monotonicity of $S$ and \eqref{eq:g_disc} then imply that 
    	$Sg_1(x,a) \leq Sg_2(x,a) + \alpha \beta \norm{g_1-g_2}_\kappa \kappa(x)$. 
    	Switching the roles of $g_1$ and $g_2$ yields 
    	$\norm{Sg_1 - Sg_2}_\kappa \leq \alpha \beta \norm{g_1 - g_2}_\kappa$.}
    \begin{equation}
    \label{eq:g_disc}
        S(g+K\kappa)(x,a) \leq Sg(x,a) + \alpha \beta K \kappa(x)
        \quad \text{for all } K \in \RR_+.
    \end{equation}
    Condition~\eqref{eq:g_disc} obviously holds, because by Assumption~\ref{a:ws}, we have
    \begin{align*}
        S(g+M\kappa)(x,a)
        &= \beta \EE_{x,a} \sup_{a' \in  \Gamma(x')} 
            \set{ r(x',a') + g_1 (x',a') + K \kappa (x')}   \\
        &= \beta \EE_{x,a} \sup_{a' \in  \Gamma(x')} 
            \set{ r(x',a') + g_1 (x',a')} + 
            \beta K \EE_{x,a} \kappa (x') \\
        &\leq Sg (x,a) + \alpha \beta K \kappa(x).
    \end{align*}
    Hence $S$ is a contraction mapping on $(\cG, \norm{\cdot}_\kappa)$ and
    claim~\ref{item:cs.1} is verified. Claims~\ref{item:cs.2} and
    \ref{item:cs.3} follow immediately from claim~\ref{item:cs.1} and the
    Banach contraction mapping theorem. 

    To see that claims~\ref{item:cs.a}--\ref{item:cs.c} hold, let $\cV$ be 
    the set of measurable functions $v: \XX \to \RR \cup \set{-\infty}$ 
    such that $(x,a) \mapsto \beta \EE_{x,a} v(x')$ is in $\cG$, and 
    define the operators $W$ on $\cV$ and $M$ on $\cG$ respectively as
    \begin{equation*}
        W v (x,a) \coloneqq \beta \EE_{x,a} v(x')
        \quad \text{and} \quad
        M g (x) \coloneqq \sup_{a \in \Gamma(x)} \left\{
            r(x,a) + g(x,a)
        \right\}.
    \end{equation*}
    Then the original Bellman operator $T$ (i.e., $Tv$ equals the right hand 
    side of \eqref{eq:bellman} given $v \in \cV$) and the transformed 
    Bellman operator $S$ in \eqref{eq:tbe} can be written as
    \begin{equation*}
        T = M W \quad \text{and} \quad S = W M.
    \end{equation*}
    In particular, for each $g \in \cG$, because $S$ maps $\cG$ into itself 
    as was shown, $W (Mg)= W M g = Sg \in \cG$. Hence, $Mg \in \cV$ by the 
    definition of $\cV$. As $g$ is chosen arbitrarily, this implies that $M$ 
    maps $\cG$ into $\cV$, and thus $T$ maps $\cV$ into itself.
    
	Since $\cG_1$ is a closed subset of $\cG$ and $S \cG_1 \subset \cG_1$, $S$ 
	is also a contraction mapping on $(\cG_1, \norm{\cdot}_\kappa)$ and the 
	unique fixed point $g^*$ of $S$ indeed lies in $\cG_1$. Next, we show 
	that $\bar v \coloneqq M g^*$ is a fixed point of $T$ in $\cV$. 
	Note that $\bar v \in \cV$ because $M \cG \subset \cV$. Moreover,
	\begin{equation*}
	    T \bar v = M W \bar v = M W M g^* 
	    = M Sg^* = M g^* = \bar v. 
	\end{equation*}
	Therefore, $\bar v$ is a fixed point of $T$ in $\cV$, as was to be shown. 
	
%	By Lemmas~6.1 and 6.2 of \cite{ma2021dynamic}, the Bellman
%    operator $T \coloneqq M W_1 W_0$ maps elements of $\cV_1$ into itself and has a
%    unique fixed point $\bar v$ in $\cV_1$ that satisfies $\bar v = M W_1 g^*$
%    and $g^* = W_0 \bar v$. 
    
    Since $\bar v = M g^*$ and $g^*=Sg^*$ as were shown, we have 
    $g^* = WM g^* = W \bar v$. To verify claim~\ref{item:cs.a}, it remains to show that $\bar{v}$ equals
    the value function $v^*$ in \eqref{eq:szvf}. For all $x_0 \in \XX$ and
    $\sigma \in \Sigma$, because $\bar v = T \bar v$, the definition of $T$ 
    implies that
    \begin{align}\label{eq:vbar}
    	\bar v(x_0) &\geq r(x_0, \sigma(x_0)) + \beta \EE_{x_0,\sigma(x_0)} \bar{v} (x_1)  \nonumber  \\
    	& \geq r(x_0, \sigma(x_0)) + \beta \EE_{x_0,\sigma(x_0)} 
    	\set{ r(x_1, \sigma(x_1)) + \beta \EE_{x_1,\sigma(x_1)} \bar{v} (x_2) }  \nonumber  \\
    	&= r(x_0, \sigma(x_0)) + \beta \EE_{x_0,\sigma(x_0)} 
    	r(x_1, \sigma(x_1)) + \beta^2 \EE_{x_0,\sigma(x_0)} \EE_{x_1,\sigma(x_1)} \bar{v} (x_2)  \nonumber  \\
    	&\geq \sum_{t=0}^N \beta^t \EE_{x_0,\sigma(x_0)} \cdots \EE_{x_{t-1},\sigma(x_{t-1})} r(x_t, \sigma(x_t))  
    	 + \beta^{N+1} \EE_{x_0,\sigma(x_0)} \cdots \EE_{x_{N},\sigma(x_{N})} \bar v (x_{N+1})  \nonumber  \\
    	&= \sum_{t=0}^N \beta^t \EE_{x_0} r(x_t, \sigma(x_t)) + 
    	\beta^{N} \EE_{x_0,\sigma(x_0)} \cdots \EE_{x_{N-1},\sigma(x_{N-1})} g^* (x_N, \sigma(x_N)).
    \end{align}
    Notice that, by Assumption~\ref{a:ws}\ref{item:ws1}, we have
    \begin{align*}
        & \abs{\beta^{N} \EE_{x_0,\sigma(x_0)} \cdots \EE_{x_{N-1},\sigma(x_{N-1})} g^* (x_N, \sigma(x_N)) }    \\
        & \leq \beta^{N} \EE_{x_0,\sigma(x_0)} \cdots \EE_{x_{N-1},\sigma(x_{N-1})} \abs{ g^* (x_N, \sigma(x_N)) }    \\
        & \leq \beta^{N} \EE_{x_0,\sigma(x_0)} \cdots \EE_{x_{N-1},\sigma(x_{N-1})} \norm{g^*}_\kappa \kappa (x_N)   \\
        & \leq (\alpha \beta)^N \norm{g^*}_\kappa \kappa(x_0) \to 0
        \quad \text{as}~N \to \infty.
    \end{align*}
    Letting $N \to \infty$ in \eqref{eq:vbar}, Lemma~\ref{lm:vsigma} implies 
    that $\bar v (x_0) \geq v_\sigma(x_0)$. Since $x_0 \in \XX$ and 
    $\sigma \in \Sigma$ are
    arbitrary, we have $\bar v \geq v^*$. Moreover, because $g^* = W \bar v$
    and $g^* \in \cG_1$ implies that a $g^*$-greedy policy $\sigma^*$ exists, all the
    inequalities in \eqref{eq:vbar} hold with equality once we let $\sigma =
    \sigma^*$. In other words, we have $\bar{v} = v_{\sigma^*} \leq v^*$. In
    summary, we have shown that $\bar v = v^*$. Hence, $g^* = W v^*$,
    $v^* = M g^*$, and claim~\ref{item:cs.a} holds.
    
    Moreover, the above arguments also imply that $\sigma^*$ is an optimal 
    policy (i.e., $v^* = v_{\sigma^*}$) if it is $g^*$-greedy. 
    Because a $g^*$-greedy policy exists by assumption,
    the set of optimal policies is nonempty and claim~\ref{item:cs.b} holds.
    To see that claim~\ref{item:cs.c} holds, it remains to show that  
    any optimal policy $\sigma^*$ is $g^*$-greedy. Note that 
    \begin{align*}
        r(x, \sigma^*(x)) + \beta g^*(x, \sigma^*(x))
        &= r(x, \sigma^*(x)) + \beta \EE_{x,\sigma^*(x)} v^* (x')  \\
        &= r(x, \sigma^*(x)) + \beta \EE_{x,\sigma^*(x)} v_{\sigma^*} (x') \\
        &= v_{\sigma^*} (x) = v^*(x) = T v^* (x) = M W v^* (x) = M g^*(x),
    \end{align*}
    where the first and last equalities hold since $g^* = W v^*$, the second 
    and fourth equalities hold because $\sigma^*$ is optimal, the 
    third equality equality holds by the definition of $v_{\sigma^*}$,
    and the fifth equality holds because $v^*$ is a fixed point of $T$ as 
    was shown above. Therefore, $\sigma^*$ is a $g^*$-greedy policy. 
    We have now shown that claim~\ref{item:cs.c} holds.
%    Since we have shown that $v^*$ is the unique fixed point of $T$ in $\cV_1$,
%    by Theorem~1 of \cite{ma2021dynamic}, the set of optimal policies is
%    nonempty, and a feasible policy is optimal if and only if it is
%    $v^*$-greedy. Since in addition $g^* = W_0 v^*$, claims~\ref{item:cs.b} and \ref{item:cs.c} hold.
%    %
\end{proof}

\begin{proof}[Proof of Theorem~\ref{t:exist}]
	
    To apply Theorem~\ref{t:cs}, it suffices to prove that $\cG_1$ is a closed
    subset of $\cG$, $S \cG_1 \subset \cG_1$, and that a $g$-greedy policy exists
    for each $g \in \cG_1$. 
    %Lemma~7 of \cite{jaskiewicz2011discounted} implies that $\cG_1$ is a closed
    %subset of $\cG$.
	
    To show that $\cG_1$ is a closed subset, let $\set{g_n}$ be a sequence in
    $\cG_1$ such that $\norm{g_n - g_0}_\kappa \to 0$ for some $g_0\in \cG$.
    Because $(\cG, \norm{\cdot}_\kappa)$ is complete, it suffices to show that
    $g_0$ is upper semicontinuous. For all $(x_0, a_0) \in \DD$ and $y >
    g_0(x_0,a_0)$. Let $\epsilon \coloneqq y - g_0(x_0, a_0)$. Since $\kappa$
    is continuous and $g_n$ is upper semicontinuous for all $n$, there exist
    $N \in \NN$ and a neighborhood $B$ of $(x_0, a_0)$ such that for all
    $(x,a) \in B$,
	\begin{equation*}
        \abs{g_N (x,a) - g_0(x, a)} < \epsilon/3
		\quad \text{and} \quad
		g_N(x, a) < g_N(x_0, a_0) + \epsilon/3.
 	\end{equation*}
    Hence, $g_0(x,a) < g_N(x,a) + \epsilon/3 < g_N (x_0,a_0) + 2\epsilon/3 <
    g_0(x_0, a_0) + \epsilon = y$ for each $(x,a) \in B$, implying that $g_0$
    is upper semicontinuous. 
	
    Fix $g \in \cG_1$. Note that $r + g$ is upper semicontinuous because both
    $r$ and $g$ are upper semicontinuous. Since, in addition, $\Gamma$ is
    compact-valued and upper hemicontinuous, Lemma~1 of
    \cite{jaskiewicz2011discounted} implies that a $g$-greedy policy exists,
    and that $x \mapsto h_g (x) \coloneqq \sup_{a \in \Gamma(x)} \set{ r(x,a) +
    g(x,a)}$ is upper semicontinuous.
	
    Since $Sg \in \cG$ by Theorem~\ref{t:cs}, to see that $S \cG_1 \subset \cG_1$,
    it remains to show that $Sg$ is upper semicontinuous.
    Assumption~\ref{a:ws} and the definition of $\cG_1$ yield $h_g \leq (d +
    \norm{g}_\kappa) \kappa$, so Lemma~7 of \cite{jaskiewicz2011discounted}
    implies $Sg (x,a) = \beta \EE_{x,a} h_g (x')$ is upper semicontinuous.
    Hence, $S\cG_1 \subset \cG_1$.
	
	All the claims of Theorem~\ref{t:exist} then follow from Theorem~\ref{t:cs}.
\end{proof}

\begin{proof}[Proof of Theorem~\ref{t:rs0}]
    To see claims~\ref{item:rs0.1}--\ref{item:rs0.3} hold, we first show that
    $S \cG \subset \cG$. Fix $g \in \cG$. Since $g\geq -\norm{g}$, we obtain
    \begin{align*}
        S g (x,a) &\geq -\frac{\beta}{\gamma}\log \EE_{x,a}\exp\left(
            -\gamma \sup_{a'\in \Gamma(x')}\set{r(x',a')-\norm{g}}
        \right)\\
		&=-\frac{\beta}{\gamma}\log \EE_{x,a}\me^{-\gamma (\bar{r}(x')-\norm{g})}=\beta[\re (x,a) -\norm{g}],
    \end{align*}
    where the last equality uses \eqref{eq:reRS}.  Since by assumption $\re$
    is bounded below, so is $Sg$.  A similar argument yields $	Sg(x,a)\leq
    \beta[\re (x,a)+\norm{g}]$, so $Sg$ is bounded above. This shows that $S$
    is a self map on $\cG$.

    To show that $S$ is a contraction mapping, we verify
    \cite{blackwell1965discounted}'s sufficient conditions. $S$ is clearly
    monotone. Let $h(x,a) \coloneqq r(x,a) + g(x,a)$. If $K \geq 0$ is any 
    constant, then for any $g\in \cG$ we have
    \begin{align*}
        S(g+K)(x,a)&=-\frac{\beta}{\gamma}\log 
            \EE_{x,a}\exp\left(
                -\gamma \sup_{a'\in \Gamma(x')}\set{h(x',a')+K}
            \right)  \\
        &= -\frac{\beta}{\gamma}\log 
            \EE_{x,a}\exp\left(
                -\gamma \sup_{a'\in \Gamma(x')}h(x',a')
            \right) + \beta K
        = Sg(x,a) + \beta K,
    \end{align*}
    so the discounting property holds. Therefore,
    claims~\ref{item:rs0.1}--\ref{item:rs0.3} hold.
    
    Let $\cV$ be all measurable maps $v: \XX \to \RR \cup \{-\infty\}$
    such that $(x,a) \mapsto -\frac{\beta}{\gamma} \log \EE_{x,a} \me^{-\gamma v(x')}$
    is in $\cG$. Define the operators $W$ on $\cV$ and $M$ on $\cG$ 
    respectively as 
    \begin{equation}
    \label{eq:wm}
	    W v(x,a) \coloneqq -\frac{\beta}{\gamma} \log \EE_{x,a} \me^{-\gamma v(x')}
	    \quad \text{and} \quad
	    M g(x) \coloneqq \sup_{a \in \Gamma (x)} \left\{
	    r(x,a) + g(x,a)
	    \right\}.
    \end{equation}
    The original Bellman operator $T$ and the transformed Bellman operator
    $S$ can then be written as $T = MW$ and $S = W M$. 
    Let $\cG_1$ be the set of upper semicontinuous functions in $\cG$. 
    Similar to the proof of Theorem~\ref{t:cs}, $\bar v \coloneqq M g^*$ 
    is a fixed point of $T$ in $\cV$ and $g^* = W \bar v$. Moreover,
    a similar argument to the proof of Theorem~\ref{t:exist} shows that
    $\cG_1$ is a closed subset of $\cG$, $S$ maps $\cG_1$ into itself, 
    the unique fixed point $g^*$ of $S$ is in $\cG_1$, and 
    a $g$-greedy policy exists for each $g \in \cG_1$. 
    To see that claim~\ref{item:rs0.a} holds, it remains to verify $v^* = \bar v$.
    
    Fix $\sigma \in \Sigma$. The definition of $T$ and the monotonicity
    of $T_\sigma$ in \eqref{eq:Tsig} imply that
    \begin{equation}
        \label{eq:vbar_ineq}
        \bar v = T \bar v \geq T_\sigma \bar v \geq \dots \geq T_\sigma^n \bar v
    \end{equation}
    for all $n \in \NN$. One can show that, for all constant
    $\ell \in \RR$ and $x \in \XX$, 
    \begin{equation}
    \label{eq:Tnsig}
        T_\sigma^n(\bar r + \ell) (x) = T_\sigma^n \bar r(x) + \beta^n \ell.
    \end{equation}
    Since $\beta \in (0,1)$, letting $n \to \infty$ yields 
    \begin{equation*}
        v_\sigma (x) = \limsup_{n \to \infty} T_\sigma^n \bar r(x)
        = \limsup_{n \to \infty} T_\sigma^n (\bar r + \ell) (x)
    \end{equation*}
    for all $x \in \XX$ and $\ell \in \RR$. Since
    $\bar v \in [\bar r - \|g^*\|, \bar r + \|g^*\|]$, by the monotonicity
    of $T_\sigma$, 
    \begin{equation}
    \label{eq:Tnvbar}
        \limsup_{n \to \infty} T_\sigma^n \bar v (x) 
        =\limsup_{n \to \infty} T_\sigma^n \bar r (x) = v_\sigma(x)
    \end{equation}
    for all $x \in \XX$.
    Letting $n\to \infty$ in \eqref{eq:vbar_ineq} then yields $\bar v \geq v_\sigma$.
    Since $\sigma$ is chosen arbitrarily, this implies $\bar v \geq v^*$. To see 
    conversely $\bar v \leq v^*$, we define $M_\sigma$ at each $g \in \cG$ by
    \begin{equation}
    \label{eq:Msig}
    	M_\sigma g(x) \coloneqq r(x, \sigma(x)) + g(x, \sigma(x)).
    \end{equation}
    Then $T_\sigma = M_\sigma W$. Since $\bar v$ is a fixed point of $T$, 
    $g^* = W \bar v$, and a $g^*$-greedy policy $\sigma^*$ exists as were shown, 
    we have
    \begin{equation*}
    	\bar v = T \bar v = MW \bar v = M g^* = M_{\sigma^*} g^* 
    	= M_{\sigma^*} W \bar v = T_{\sigma^*} \bar v.
    \end{equation*}
    Hence, $\bar v = T_{\sigma^*}^n \bar v$ for all $n \in \NN$. Letting
    $n \to \infty$ and then using \eqref{eq:Tnvbar} yield 
    $\bar v = v_{\sigma^*} \leq v^*$. In summary, we have 
    shown that $\bar v = v^*$. Claim~\ref{item:rs0.a} is now verified.
    
    The above arguments also imply that a policy is optimal if it is $g^*$-greedy,
    and an optimal policy exits (since there exists a $g^*$-greedy policy). Hence
    claim~\ref{item:rs0.b} holds. To see that claim~\ref{item:rs0.c} holds, it remains
    to show that any optimal policy $\sigma^*$ must be $g^*$-greedy, equivalently,
    $v^* = v_{\sigma^*}$ implies $M_{\sigma^*} g^* = M g^*$.
    
    To see this, because $\bar v = v^*$ and $g^* = W \bar v$, we have
    \begin{equation*}
        M_{\sigma^*} g^* = M_{\sigma^*} W v^* = T_{\sigma^*} v^*
        \quad \text{and} \quad 
        Mg^* = MWv^* = Tv^*. 
    \end{equation*}
    Since $v^* = v_{\sigma^*}$ and 
    $v_{\sigma^*} = \limsup_{n \to \infty} T_{\sigma^*}^n v^*$ as shown above, 
    \begin{equation}
    \label{eq:Ms*}
    	M_{\sigma^*} g^* = 
    	T_{\sigma^*} \left(
    	    \limsup_{n \to \infty} T_{\sigma^*}^n v^*  
    	\right).
    \end{equation}
    Because $\me^{-\gamma \limsup_{n \to \infty}x_n} = 
    \liminf_{n \to \infty} \me^{-\gamma x_n}$ for a given sequence $\{x_n\}$, 
    the Fatou's lemma implies that, for all $x \in \XX$,
    \begin{equation*}
        \EE_{x, \sigma^*(x)} 
        \me^{-\gamma \limsup_{n \to \infty} T_{\sigma^*}^n v^*(x')}
        \leq \liminf_{n \to \infty}
        \EE_{x, \sigma^*(x)} 
        \me^{-\gamma T_{\sigma^*}^n v^*(x')}.
    \end{equation*}
    So $W (\limsup_{n \to \infty} T_{\sigma^*}^n v^*) \geq 
    \limsup_{n \to \infty} W(T_{\sigma^*}^n v^*)$. Applying $M_\sigma^*$ on both 
    sides and then using \eqref{eq:Ms*}, $T_{\sigma^*} = M_{\sigma^*} W$, and the 
    definition of $M_{\sigma^*}$ yield
    \begin{equation*}
        M_{\sigma^*} g^* = T_{\sigma^*} \left(
            \limsup_{n \to \infty} T_{\sigma^*}^n v^*  
        \right)
        \geq \limsup_{n \to \infty} T_{\sigma^*}^{n+1} v^*
        = v_{\sigma^*}
        = v^* = Tv^* = Mg^*.
    \end{equation*}
    Because $M_{\sigma^*} g^* \leq M g^*$ by definition, we must have 
    $M_{\sigma^*} g^* = M g^*$, equivalently, $\sigma^*$ is $g^*$-greedy.
    Claim~\ref{item:rs0.c} is verified and the proof is now complete. 
\end{proof}

\begin{proof}[Proof of Theorem~\ref{t:rs}]
	Let $\cV_2$ be the set of measurable maps $v: \XX \to \RR \cup \{-\infty\}$
	such that $(x,a) \mapsto -\frac{\beta}{\gamma} \log \EE_{x,a} \me^{-\gamma v(x')}$
	is in $\cG_2$. Define the operators $W$ on $\cV_2$ and $M$ on $\cG_2$ 
	as in \eqref{eq:wm}. By Jensen's inequality and 
	Assumption~\ref{a:ws}\ref{item:ws1}, for all $(x,a) \in \DD$ and $K \in \RR_+$, 
	we have
	\begin{equation}
	\label{eq:W_upbd}
		W(K \kappa) (x,a) = -\frac{\beta}{\gamma} \log \EE_{x,a}
		    \me^{-\gamma K \kappa(x')}
		\leq \beta \EE_{x,a} K \kappa (x') 
		\leq \alpha \beta K \kappa(x). 
	\end{equation}
	We first show that $S \cG_2 \subset \cG_2$. Fix $g \in \cG_2$. 
	Assumption~\ref{a:ws}\ref{item:ws1} implies that
	\begin{equation*}
	    Mg(x) \leq \bar r (x) + \|g\|_\kappa \kappa (x) 
	    \leq (d+ \|g\|_\kappa) \kappa (x)
	\end{equation*}
	for all $x \in \XX$. Using the monotonicity of $W$ and \eqref{eq:W_upbd} 
	then gives
	\begin{equation*}
	    Sg(x,a) = WMg(x,a)
        \leq W ((d + \|g\|_\kappa) \kappa) (x,a) 
        \leq \alpha \beta (d + \|g\|_\kappa) \kappa(x)
	\end{equation*}
	for all $(x,a) \in \DD$. Hence, $Sg/\kappa$ is bounded above. Moreover,
	a similar argument to the proof of Theorem~\ref{t:rs0} shows that $Sg$ 
	is bounded below.
	
	To see that $Sg (x,a)$ is increasing in $x$, let $x_1, x_2 \in \XX$ with 
	$x_1 \leq x_2$ and fix $a \in \Gamma(x_1)$. By 
	Assumption~\ref{a:rs_ubdd}\ref{item:rs_ubdd.1}--\ref{item:rs_ubdd.2}, 
	$x_1' := f(x_1, a, \epsilon') \leq f(x_2,a,\epsilon') =: x_2'$ and 
	$\Gamma(x_1') \subset \Gamma(x_2')$. Since in addition $r(x,a)$ and $g(x,a)$ 
	are increasing in $x$, we have
	\begin{align*}
	    Mg(x_1') &= \sup_{a' \in \Gamma(x_1')} \left\{
	        r(x_1',a') + g(x_1',a')
	    \right\}
	    \leq \sup_{a' \in \Gamma(x_2')} \left\{
	    r(x_1',a') + g(x_1',a')
	    \right\}  \\ 
	    &\leq \sup_{a' \in \Gamma(x_2')} \left\{
	    r(x_2',a') + g(x_2',a')
	    \right\} = Mg(x_2').
	\end{align*}
	Noting that $Mg(x_i') = Mg(f(x_i ,a, \epsilon'))$ for $i = 1,2$, 
	\begin{align*}
	    Sg(x_1, a)  
	    = -\frac{\beta}{\gamma} \log \EE \me^{-\gamma Mg(f(x_1 ,a, \epsilon'))}  
	    \leq -\frac{\beta}{\gamma} \log \EE \me^{-\gamma Mg(f(x_2 ,a, \epsilon'))} 
	    = Sg(x_2, a).
	\end{align*}
	Hence, $Sg(x,a)$ is increasing in $x$. We have now shown that 
	$S \cG_2 \subset \cG_2$. 
	
	Next, we show that $S$ is a contraction on $(\cG_2, \|\cdot\|_\kappa)$.
	Let $h_1, h_2 : \XX \to \RR$ be increasing functions 
	on $\XX$. By Assumption~\ref{a:rs_ubdd}\ref{item:rs_ubdd.1}, 
	$\epsilon' \mapsto h_i(f(x,a,\epsilon'))$ is increasing for all $(x,a) \in \DD$ and 
	$i = 1,2$. Since Assumption~\ref{a:rs_ubdd}\ref{item:rs_ubdd.3} implies that 
	$\epsilon'$ is independent across dimensions, applying the 
	Fortuin–Kasteleyn–Ginibre inequality \citep{fortuin1971correlation} gives
	\begin{align*}
		\EE_{x,a} \me^{-\gamma (h_1 + h_2) (x')} 
		&= \EE \me^{-\gamma h_1(f(x,a,\epsilon'))} 
		    \me^{-\gamma h_2(f(x,a,\epsilon'))}  \\
		&\geq \EE \me^{-\gamma h_1(f(x,a,\epsilon'))}
		    \EE \me^{-\gamma h_2(f(x,a,\epsilon'))}
		= \EE_{x,a} \me^{-\gamma h_1 (x')}
		    \EE_{x,a} \me^{-\gamma h_2 (x')}
	\end{align*}
	for all $(x, a) \in \DD$ whenever the expectation is well-defined.
	Therefore,
	\begin{equation}
	\label{eq:fkg}
	    W(h_1 + h_2)(x,a) \leq W h_1(x,a) + Wh_2(x,a)
	\end{equation}
	for all $(x,a) \in \DD$ and real-valued increasing functions $h_1, h_2$ on $\XX$.
	By the definition of $M$, for all $x \in \XX$,
	\begin{equation*}
	    M (g+ K \kappa) (x) = Mg(x) + K \kappa(x).
	\end{equation*}
	Because both $Mg$ and $\kappa$ are increasing on $\XX$, applying \eqref{eq:W_upbd} 
	and \eqref{eq:fkg} yields
	\begin{align*}
	    S(g+K\kappa)(x,a) &= WM(g + K \kappa)(x,a) = W(Mg + K \kappa)(x,a) \\
	    &\leq WMg(x,a) + W (K\kappa)(x,a) \leq Sg(x,a) + \alpha \beta K \kappa(x)
	\end{align*} 
	for all $x \in \XX$. 
	Since in addition $S$ is monotone, $S$ is a contraction mapping
	on $\cG_2$ as was to be shown. Moreover, because $(\cG_2, \|\cdot\|_\kappa)$
	is a Banach space, claims~\ref{item:rs.1}--\ref{item:rs.3} 
	follow immediately by the Banach contraction mapping theorem.
	
	Let $\cG_3$ be the set of upper semicontinuous functions in $\cG_2$.
	Similar to the proof of Theorem~\ref{t:cs}, we can show that 
	$\bar v \coloneqq Mg^*$ is a fixed point of $T$ in $\cV_2$ and $g^* = W \bar v$.
	A similar argument to the proof of Theorem~\ref{t:exist} shows that
	$\cG_3$ is a closed subset of $\cG_2$, $S$ maps $\cG_3$ into itself,
	the unique fixed point $g^*$ of $S$ is in $\cG_3$, and a $g$-greedy policy 
	exists for each $g \in \cG_3$. To see claim~\ref{item:rs.a} 
	holds, it remains to verify $v^* = \bar v$.
	
	To that end, we first show that \eqref{eq:Tnvbar} holds in the current
	setting. For each $\sigma \in \Sigma$ and $g \in \cG_2$, let $M_\sigma g$ 
	be defined as in \eqref{eq:Msig}. By the monotonicity of $M_\sigma$, 
	\eqref{eq:W_upbd} and \eqref{eq:fkg},
	\begin{align*}
		T_\sigma (\bar r + K \kappa) (x) 
		&= M_\sigma W (\bar r + K \kappa) (x) 
		\leq M_\sigma (W \bar r + W (K \kappa)) (x)  \\
		&= M_\sigma W \bar r(x) + W(K \kappa) (x, \sigma(x))
		\leq T_\sigma \bar r(x) + \alpha\beta K \kappa(x)
	\end{align*}
	for all $x \in \XX$ and $K \in \RR_+$. Induction shows that, for all 
	$x \in \XX$, $K \in \RR_+$ and $n \in \NN$,
	\begin{equation}
	\label{eq:Tsig_ineq}
		T_\sigma^n (\bar r + K \kappa) (x)
		\leq T_\sigma^n \bar r (x) + (\alpha\beta)^n K \kappa(x).
	\end{equation}
	Because $\bar v = M g^*$ and $g^*$ is in $\cG_2$, $g$ has a lower bound 
	$L \in \RR$ and 
	\begin{equation*}
	    \bar r + L \leq \bar v 
	    \leq \bar r + \|g^*\|_\kappa \kappa,
	\end{equation*}
	where both $\bar r$ and $\kappa$ are increasing functions on $\XX$. The
	monotonicity of $T_\sigma$, \eqref{eq:Tnsig} and \eqref{eq:Tsig_ineq} 
	then imply that, for all $x \in \XX$ and $n \in \NN$,
	\begin{align*}
	    T_\sigma^n \bar r(x) + (\alpha \beta)^n L
		\leq T_\sigma^n \bar v (x)
		\leq T_\sigma^n \bar r(x) + (\alpha \beta)^n \|g^*\|_\kappa \kappa (x).
	\end{align*}
	Letting $n \to \infty$ yields \eqref{eq:Tnvbar}. Similar to the proof of Theorem~\ref{t:rs0}, we can show that 
	\eqref{eq:vbar_ineq} holds in the current setting. Letting $n \to \infty$
	in \eqref{eq:vbar_ineq} then gives $\bar v \geq v_\sigma$ and thus 
	$\bar v \geq v^*$ since $\sigma$ is arbitrary. 
	A similar argument to the proof of Theorem~\ref{t:rs0} shows that
	$\bar v \leq v^*$. In summary, $\bar v = v^*$. Claim~\ref{item:rs.a} is
	verified.
	
	The proof of claims~\ref{item:rs.b}--\ref{item:rs.c} is same to the 
	proof of claims~\ref{item:rs0.b}--\ref{item:rs0.c} in Theorem~\ref{t:rs0} 
	and thus omitted. Therefore, all the statements of the theorem hold.
\end{proof}

\bibliographystyle{ecta}
%\bibliographystyle{plainnat}

%\bibliography{ubdp}

\end{document}